\theoremstyle{plain}
\newtheorem{theorem}{Theorem}[section]
\newtheorem*{theorem*}{Theorem}
\newtheorem{corollary}[theorem]{Corollary}
\newtheorem{lemma}[theorem]{Lemma}
\newtheorem{proposition}[theorem]{Proposition}
\newtheorem*{conjecture*}{Conjecture}
\theoremstyle{definition}
\newtheorem*{definition}{Definition}
\theoremstyle{remark}
\newtheorem{remark}[theorem]{Remark}
\newtheorem*{remark*}{Remark}
\newtheorem*{remarks*}{Remarks}
\numberwithin{equation}{section}
\newcommand{\R}{\mathbb R}
\newcommand{\N}{\mathbb N}
\newcommand{\Z}{\mathbb Z}
\newcommand{\C}{\mathbb C}
\newcommand{\HH}{\mathbb H}
\newcommand{\Q}{{\mathbb Q}}
\def\({\left(}
\def\){\right)}
\def\t{\tau}
\def\h{\eta}
\newcommand{\SL}{\operatorname{\operatorname{SL}}}
\def\G{\Gamma}
\def\g{\gamma}
\newcommand{\commenttext}[1]{}
\begin{document}

\title
{Maass-Jacobi Poincar\'e series and Mathieu Moonshine}
\author{Kathrin Bringmann}
\address{Mathematical Institute\\University of
Cologne\\ Weyertal 86-90 \\ 50931 Cologne \\Germany}
\email{kbringma@math.uni-koeln.de}
\author{John Duncan} 
\address{{Department of Mathematics, Applied Mathematics and Statistics}\\Case Western Reserve University\\Cleveland, OH 44106\\U.S.A.}
\email{john.duncan@case.edu}
\author{Larry Rolen}
\address{Mathematical Institute\\University of
Cologne\\ Weyertal 86-90 \\ 50931 Cologne \\Germany}
\email{lrolen@math.uni-koeln.de}
\thanks {The research of the first author was supported by the Alfried Krupp Prize for Young University Teachers of the Krupp foundation and the research leading to these results has received funding from the European Research Council under the European Union's Seventh Framework Programme (FP/2007-2013) / ERC Grant agreement n. 335220 - AQSER.  The research of the second author was supported in part by the Simons Foundation (\#316779) and the U.S. National Science Foundation (DMS 1203162). The third author thanks the University of Cologne and the DFG for their generous support via the University of Cologne postdoc grant DFG Grant D-72133-G-403-151001011, funded under the Institutional Strategy of the University of Cologne within the German Excellence Initiative.} 
\subjclass[2010] {11F20,11F22,11F37,11F50,20C34,20C35}

\date{\today}

\keywords{Jacobi form; Maass form; Mathieu group; Mathieu moonshine}

\begin{abstract}
Mathieu moonshine attaches a weak Jacobi form of weight zero and index one to each conjugacy class of the largest sporadic simple group of Mathieu. We introduce a modification of this assignment, whereby weak Jacobi forms are replaced by semi-holomorphic Maass-Jacobi forms of weight one and index two. We prove the convergence of some Maass-Jacobi Poincar\'e series of weight one, and then use these to characterize the semi-holomorphic Maass-Jacobi forms arising from the largest Mathieu group. 
\end{abstract}

\maketitle

\section{Introduction and statement of results}\label{sec:intro}

The Mathieu groups were discovered by Mathieu over 150 years ago \cite{Mat_1861,Mat_1873}. Today, we recognize them as five of the 26 sporadic simple groups, serving as the exceptions to the general rule that most finite simple groups, namely the non-sporadic ones, are either cyclic of prime order, alternating of degree five or more, or finite of Lie type. This is the content of the classification of finite simple groups \cite{MR2072045}. The {largest Mathieu group}, denoted $M_{24}$, may be characterized as the unique (up to isomorphism) proper subgroup of the alternating group of degree $24$ that acts quintuply transitively on $24$ points \cite{MR1010366}. The stabilizer of a point in $M_{24}$ is the sporadic Mathieu group $M_{23}$.

In 1988, Mukai established a surprising role for $M_{23}$ in geometry by showing that it controls, in a certain sense, the finite automorphisms of certain complex manifolds of dimension two. More precisely, he proved \cite{Mukai} that any finite group of symplectic automorphisms of a complex K3 surface is isomorphic to a subgroup of $M_{23}$ that has five orbits in the natural permutation representation on $24$ points. Furthermore, any such subgroup may be realized via symplectic automorphisms of some complex K3 surface. For certain purposes, such as in mathematical physics, K3 surfaces serve as higher dimensional analogues of elliptic curves. For example, they are well-adapted to the constructions of string theory (see e.g. \cite{MR1479699}). All complex K3 surfaces have the same diffeomorphism type, so we may consider one example, such as the Fermat quartic $\{X_1^4+X_2^4+X_3^4+X_4^4=0\}\subset \mathbb{P}^3$, and regard the general K3 surface as a choice of complex structure on its underlying real manifold. Any K3 surface admits a non-vanishing holomorphic two-form, unique up to scale, and an automorphism that acts trivially on this two-form is called symplectic. We refer to \cite{MR2030225,MR785216} for more background on K3 surfaces.

In 2010, Eguchi, Ooguri, and Tachikawa made a stunning observation \cite{Eguchi2010} that relates the largest Mathieu group $M_{24}$ to K3 surfaces. To describe this, let $Z(\tau;z)$ be the unique weak Jacobi form of weight zero and index one satisfying $Z(\tau;0)=24$. It can be shown that $Z$ may be written in the form (throughout $q:=e^{2\pi i \tau}$)
\begin{gather}\label{eqn:intro-ZSCAdec}
Z(\tau;z)=24\mathcal A(\tau;z)\frac{\theta_1(\tau;z)^2}{\eta(\tau)^{3}}+A(q)q^{-\frac18}\frac{\theta_1(\tau;z)^2}{\eta(\tau)^{3}}
\end{gather}
for some series $A(q)=\sum_{n\geq 0}A_nq^n\in\Z[[q]]$,
where $\theta_1$ is the usual Jacobi theta function
\[
	\theta_1(\tau;z):=i\sum_{n\in\Z}(-1)^n\zeta^{\left(n+\frac12\right)}q^{\frac12\left(n+\frac12\right)^2},
\]
$\eta$ denotes the Dedekind eta function, $\eta(\tau):=q^{1/24}\prod_{n\geq1}(1-q^n)$, and $\mathcal A$ is the Lerch sum
\begin{gather}\label{eqn:intro-mu}
	\mathcal A(\tau;z):=\frac{i \zeta^{\frac12}}{\theta_1(\tau;z)}\sum_{n\in\Z}(-1)^n\frac{q^{\frac{n(n+1)}2}\zeta^n}{1-\zeta q^n},
\end{gather}
where $\zeta:=e^{2\pi i z}$ throughout. Note that $\mathcal A(\tau;z)$ is, up to a scalar multiple, the specialization $\mu(z,z;\tau)$ where $\mu$ denotes the Zwegers $\mu$-function \cite{zwegers}. The authors of \cite{Eguchi2010} noticed that each of the first five terms
\begin{gather}\label{eqn:intro-fiveAns}
A_1=90,\ \ \ \  A_2=462,\ \ \ \ A_3=1540,\ \ \ \ A_4=4554,\ \ \ \ A_5=11592,
\end{gather}
of $A$ is twice the dimension of an irreducible representation of $M_{24}$. The terms $A_6$ and $A_7$ admit simple expressions as positive integer combinations of dimensions of irreducible representations of $M_{24}$.  Note that $A_0=-2$, which we may regard as $-2$ times the dimension of the trivial irreducible representation. 

The functions above are motivated on physical grounds. For example, the weak Jacobi form $Z$ is the elliptic genus\footnote{See \cite{MR895567,MR2536791} for background on elliptic genera. An explicit computation of the K3 elliptic genus first appeared in \cite{Eguchi1989}.} of a K3 surface. As a consequence of what Witten has explained in \cite{MR970278}, reformulating elliptic genera in terms of supersymmetric non-linear sigma models, we know that $Z$ can be expressed as a linear combination of characters of unitary irreducible representations of the small $N=4$ super conformal algebra at $c=6$ \cite{Eguchi1987}, and these characters are given by $\mathcal A\theta_1^2\eta^{-3}$ and $q^{n-1/8}\theta_1^2\eta^{-3}$ in \cite{Eguchi1988}; see also \cite{Eguchi2009a}. Thus, in physical terms, the numbers $A_n$ encode (virtual) multiplicities of irreducible representations of the $N=4$ algebra, arising from its action on some Hilbert space.

This connection between (\ref{eqn:intro-fiveAns}) and $M_{24}$ may be compared to the observations of McKay and Thompson \cite{Tho_NmrlgyMonsEllModFn}, which initiated the investigations of Conway and Norton \cite{MR554399}, and the subsequent development\footnote{One may argue that the initiation of monstrous moonshine occurred earlier with Ogg's observation \cite{Ogg_AutCrbMdl} that the primes $p$ dividing the order of the monster are precisely those for which the normalizer of $\Gamma_0(p)$ in $\SL_2(\R)$ defines a genus zero quotient of the upper-half plane.} of monstrous moonshine. If $J(\tau)$ denotes the unique weakly holomorphic modular form of weight zero such that $J(\tau)=q^{-1}+O(q)$, then 
\begin{gather*}\label{eqn:intro-Jqs}
	J(\tau)=q^{-1}+194884q+21493760q^2+\cdots,
\end{gather*}
and we have $196884=1+196883$ and $21493760=1+196883+21296876$, where the right-hand sides are sums of dimensions of irreducible representations of the monster. For an analogy that is somewhat closer, we may replace $J$ with the weight $1/2$ modular form $\eta J$. Upon expanding $\eta(\tau)J(\tau)=q^{1/24}\sum_{n\geq -1}a_nq^n$, we find 
\begin{gather}\label{eqn:intro-fourans}
	a_1=196883,\;\ \ \ \ \ 
	a_2=21296876,\;\ \ \ \ \ 
	a_3=842609326,\; \ \ \ \ \ 
	a_4=19360062527,\;\ \ \ \ \ 
\end{gather}
all of which are degrees of irreducible representations of the monster \cite{ATLAS}. Note that the zeroth term is $a_0=-1$, which we may regard as $-1$ times the dimension of the trivial representation. 

From a purely number theoretic point of view, $\eta J$ may be characterized as the unique weakly holomorphic modular form of weight $1/2$ with multiplier system coinciding with that of $\eta$, and satisfying $\eta(\tau) J(\tau)=q^{-23/24}-q^{1/24}+O(q)$ as $\tau\to i\infty$. It is natural to ask if there is a similar description of $A$. As one might guess from the formula (\ref{eqn:intro-ZSCAdec}), it is better for modular properties\footnote{Motivated by sigma model elliptic genera, Eguchi and Hikami \cite{Eguchi2008} described the modular properties of $H$. Interestingly, the function $H$ appeared earlier in the number theory literature, as one of the examples at the end of \cite{Pri_SmlPosWgt_II}.} to consider $H(\tau)=q^{-1/8}A(q)$. Indeed, this function $H$ is beautifully characterized\footnote{The characterization given here is not hard to prove. We refer the reader to \cite{Cheng2011} for a detailed argument.} as the unique mock modular form of weight $1/2$ with multiplier system coinciding with that of $\eta^{-3}$ satisfying $H(\tau)=q^{-1/8}+O(q)$ as $\tau\to i \infty$. We also have the elegant expression \cite{Eguchi2009a}
\begin{gather}\label{eqn:intro-Hviamu}
	H(\tau)=-8\left(\mathcal A\left(\tau;\tfrac12\right)+\mathcal A\left(\tau;\tfrac{\tau}{2}\right)+\mathcal A\left(\tau;\tfrac{1+\tau}{2}\right)\right)
\end{gather}
 in terms of the Lerch sum $\mathcal A$ of (\ref{eqn:intro-mu}). See (\ref{eqn:defnH}) for an alternative formula for $H$, which is well-adapted to the computation of its coefficients. 
 
 Here we have used the term mock modular form, which refers to a holomorphic function on the upper-half plane whose failure to transform as a modular form is encoded by suitable integrals of an auxiliary function, called the shadow of the mock modular form. The notion arose quite recently from the foundational work of Zwegers \cite{zwegers} and Bruinier and Funke \cite{BruinierFunke}, 
with important further developments supplied by the first author and Ono \cite{BringmannOno2006} and Zagier \cite{zagier_mock}. In this work, we regard mock modular forms as an aspect of the theory of modular invariant (non-holomorphic) harmonic functions on the upper-half plane, by defining them (see \S\ref{sec:jac:mmf}) as the holomorphic parts of harmonic weak Maass forms. The works \cite{Dabholkar:2012nd,OnoCDM,zagier_mock} provide nice introductions to the theory, with \cite{Dabholkar:2012nd} including a discussion of the particular example $H$. 

We have seen evidence that both the monster group and the Mathieu group $M_{24}$ enjoy special, if not mysterious, relationships to (mock) modular forms of weight $1/2$. Alternatively, the weak Jacobi form $Z$ of (\ref{eqn:intro-ZSCAdec}) may serve for $M_{24}$ as an analogue of the elliptic modular invariant $J$. Reformulations such as this have an algebraic significance, for the appearance of dimensions of irreducible representations in (\ref{eqn:intro-fiveAns}) and (\ref{eqn:intro-fourans}) suggests the existence of graded vector spaces with module structures for the corresponding groups. In the case of the monster, this is a pioneering conjecture of Thompson {\cite{Tho_NmrlgyMonsEllModFn}. The construction of such a module is an important step in explaining the original observations, and of course, this construction problem changes depending on which function one considers. For example, the problem varies if one studies $J$ instead of $\eta J$ or $Z$ instead of $H$.

For monstrous moonshine, the existence of a (possibly virtual) monster module with the desired properties was proven by Atkin, Fong, and Smith \cite{MR822245}, although they did not provide an explicit construction. A monster module $V^{\natural}$ with rich algebraic structure was constructed concretely by Frenkel, Lepowsky, and Meurman \cite{FLMPNAS,FLM,FLMBerk}. Soon after this Borcherds introduced the notion of a vertex algebra \cite{Bor_PNAS} and demonstrated that $V^{\natural}$ is an example. Frenkel, Lepowsky, and Meurman then established the conjecture of Thompson in a strong sense, by showing that the monster is precisely the group of vertex algebra automorphisms of $V^{\natural}$ that preserve a certain distinguished vector, called the Virasoro element. This Virasoro element endows $V^{\natural}$ with a module structure for the Virasoro algebra, being the universal central extension of the Lie algebra of polynomial vector fields on the circle $S^1$. The existence of this structure indicates the relevance of $V^{\natural}$ to mathematical physics and to conformal field theory in particular. We refer the reader to \cite{Gaberdiel:1999mc} for more on the role of vertex algebras in conformal field theory.

For the Mathieu group, Gannon \cite{Gannon:2012ck} has proven the existence of an $M_{24}$-module $K=\bigoplus_{n>0}K_{n-1/8}$ such that $H(\tau)=-2q^{-1/8}+\sum_{n>0}\dim K_n q^{n-1/8}$. Moreover, if we define $H_g(\tau)$ for $g\in M_{24}$ by setting
\begin{gather}\label{eqn:intro-defnHg}
H_g(\tau):=-2q^{-\frac18}+\sum_{n>0}({\rm tr}_{K_{n-\frac18}}g)q^{n-\frac18},
\end{gather}
then the functions $H_g$, which are the $M_{24}$ twinings of $H$, are mock modular forms of weight $1/2$ for certain subgroups of $\SL_2(\Z)$ and coincide precisely with the predictions of \cite{MR2793423,Eguchi2010a,Gaberdiel2010a, Gaberdiel2010}. We define the function $H_g$ in a more explicit fashion in \S\ref{sec:M24}. Now consider the functions $Z_g(\tau;z)$, defined by setting
\begin{gather}\label{eqn:intro-ZgHg}
	Z_g(\tau;z):=\chi(g)\mathcal A(\tau;z)\frac{\theta_1(\tau;z)^2}{\eta(\tau)^3}
	+H_g(\tau)\frac{\theta_1(\tau;z)^2}{\eta(\tau)^3},
\end{gather}
where $\chi(g)$ is the number of fixed points of $g$ in the defining permutation representation. Then the functions $Z_g$ are weak Jacobi forms of weight zero and index one for (the same) certain subgroups of $\SL_2(\Z)$ \cite{MR2985326}. Despite this progress, the $M_{24}$-module $K$ has not been constructed explicitly. For this reason, it is important to consider alternative formulations of the Mathieu moonshine observation (\ref{eqn:intro-fiveAns}).

In this article, we consider such an alternative formulation, whereby the weak Jacobi forms $Z_g$ are replaced by semi-holomorphic Maass-Jacobi forms $\phi_g$ of weight one and index two. Before stating our main result, we recall that Maass-Jacobi forms were introduced in \cite{MR2680205} and studied further in \cite{MR2805582}. We give the precise definition in \S\ref{sec:jac}, and invite the reader to regard semi-holomorphic Maass-Jacobi forms as related to the usual Jacobi forms, in much the same way that harmonic weak Maass forms are related to modular forms. 
We refer to \S\ref{sec:jac} also for the notion of principal parts and to \S\ref{sec:M24} for the definition of the $\phi_g$, for $g\in M_{24}$. To state our characterization, we require the theta function $\vartheta^{(1)}_{1,2}(\tau;z)$ (see \S\ref{sec:MaaJacPoi}), defined by setting
\[
\vartheta_{1,2}^{(1)} \left( \tau; z\right) := 
\sum_{\lambda\in\Z} q^{2\lambda^2 } \zeta^{4\lambda} \left(  \zeta q^{\lambda} -  \zeta^{-1}q^{-\lambda}\right).
\]
Our main result is the following characterization theorem.
\begin{theorem}\label{thm:M24:chzn}
Suppose that $\phi$ is a semi-holomorphic Maass-Jacobi form of weight one and index two with the same level and multiplier system as $\phi_g$ for some $g\in M_{24}$. If the principal part of $\phi$ at the infinite cusp is $2\vartheta^{(1)}_{1,2}$ and if the principal parts at non-infinite cusps vanish, then $\phi=\phi_g$.
\end{theorem}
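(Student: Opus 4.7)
The plan is to prove the theorem by establishing a general uniqueness statement: if $\psi$ is a semi-holomorphic Maass-Jacobi form of weight one and index two with the same level and multiplier system as $\phi_g$, and whose principal parts vanish at every cusp, then $\psi \equiv 0$. Applying this to $\psi := \phi - \phi_g$ immediately yields the theorem, since by hypothesis the principal parts of $\phi$ and $\phi_g$ at every cusp agree.

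The main step is to analyze the shadow of $\psi$. By the general theory of semi-holomorphic Maass-Jacobi forms developed in \cite{MR2680205, MR2805582}, the appropriate lowering operator sends a semi-holomorphic Maass-Jacobi form of weight $k$ and index $m$ to a (skew-)holomorphic Jacobi form of complementary weight $3-k$ and the same index $m$, and vanishing of the principal parts at every cusp corresponds to cuspidality of the shadow. In the situation at hand this produces a Jacobi cusp form of weight two and index two with level and multiplier inherited from $\psi$. Via the theta decomposition, this space is identified with a space of vector-valued modular forms of weight $3/2$ for the appropriate group, and the triviality of that space can be verified either by a direct dimension count or by Skoruppa-type vanishing results, on a case-by-case basis for the multiplier systems of the $\phi_g$.

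Once the shadow of $\psi$ vanishes, $\psi$ is a weakly holomorphic Jacobi form of weight one and index two, and the vanishing of principal parts at every cusp upgrades it to a holomorphic Jacobi cusp form. A second appeal to Skoruppa's vanishing theorem for holomorphic Jacobi cusp forms of weight one (adapted to the relevant level and multiplier) then forces $\psi = 0$, completing the argument.

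The main obstacle will be the bookkeeping of multipliers and levels through these two reductions: one must verify, uniformly for each conjugacy class $g \in M_{24}$, that both auxiliary spaces (weight two index two Jacobi cusp forms, and weight one index two holomorphic Jacobi cusp forms, with the congruence data attached to $\phi_g$) are actually trivial. A secondary technical point is to confirm that vanishing of the Maass-Jacobi principal parts at the finite cusps, in the sense of \S\ref{sec:jac}, really does force cuspidality of the shadow at those cusps; this should be immediate from comparing Fourier-Jacobi expansions, but requires a careful reading of the definitions.
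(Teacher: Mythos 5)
Your overall architecture (subtract $\phi_g$, kill the shadow, then kill the remaining holomorphic Jacobi form) matches the paper's, but the step you use to kill the shadow fails. You propose to show that the space of skew-holomorphic Jacobi cusp forms of weight two and index two with the congruence data of $\phi_g$ is trivial, "by a direct dimension count or by Skoruppa-type vanishing results." That space is \emph{not} trivial: the shadow $\xi_{1,2}(\phi_g)$ itself is a non-zero element of $J^{\text{\upshape sk,cusp}}_{\sigma_g,2,2}(n_g)$ whenever $\chi(g)\neq 0$, since $\widehat{H}_g$ has non-holomorphic part built from $\chi(g)\,\eta^3$. Already for $g=e$ (level one, trivial multiplier) the space contains the weight-$3/2$ cusp form $\eta^3$ under the theta decomposition, so no dimension count will give zero. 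The paper's Proposition \ref{thm:uniq} avoids this entirely: it uses the pairing $\{\phi,\psi\}=\langle\xi_{1,2}(\phi),\psi\rangle$ of \S\ref{sec:decps}, which by Lemma \ref{lem:decps:pairprin} is computed purely from the principal part of $\phi$. Vanishing principal parts therefore force $\langle\xi_{1,2}(\psi),\cdot\rangle$ to vanish against the whole cusp space, and non-degeneracy of the Petersson product — not emptiness of the target space — gives $\xi_{1,2}(\psi)=0$. This Bruinier--Funke-style pairing argument is the missing idea in your write-up.

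Two further points. First, your reduction to $\psi=\phi-\phi_g$ silently assumes that $\phi_g$ has principal part $2\vartheta^{(1)}_{1,2}$ at $\infty$ and vanishing principal parts at the other cusps; this is not a hypothesis but a computation (Proposition \ref{prop:m24:ppartsphig}), carried out case by case from the expansions of the weight-two forms $T_g$ at every cusp of $\Gamma_0(n_g)$ recorded in Table \ref{tab:M24:hatTcusp}. Second, in the final step you cannot simply invoke Skoruppa's vanishing theorem for weight-one Jacobi forms, which is a level-one statement; weight-one Jacobi forms of higher level need not vanish a priori. The paper instead writes any $\phi\in J_{1,2}(N)$ as $h(\tau)q^{1/8}\vartheta^{(1)}_{1,2}(\tau;z)$, deduces $2h\eta^3\in M_2(N)$, hence $h(8\tau)\in M_{1/2}(\operatorname{lcm}(64,N))$ supported on exponents $\equiv 7\pmod 8$, and verifies by an explicit (computer) check that this subspace is zero for each of the finitely many levels arising from Table \ref{table_chiT}. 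Some such level-specific verification is unavoidable.
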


Theorem \ref{thm:M24:chzn} is a strong motivation for a closer consideration of the functions $\phi_g$ and their relationship to $M_{24}$, since it implies that the functions $\phi_g$ may be constructed in a uniform manner as the Maass-Jacobi Poincar\'e series attached to certain easily described multiplier  systems on certain subgroups of the modular group. See Corollary \ref{cor:phigispseries} for a precisely formulated result. From this point of view, our characterization of the $\phi_g$ may be regarded as an analogue of the main result of \cite{Cheng2011}, which gives a uniform construction of the mock modular forms via Rademacher sums. Note that no such construction or characterization holds for the weak Jacobi forms $Z_g$, since they generally have non-vanishing principal parts at cusps other than the infinite one. See \S\ref{sec:M24} for more on this point.
 
Rademacher sums, a kind of regularized Poincar\'e series, play a directly analogous role in other moonshine phenomena as well. For the case of monstrous moonshine, it was proven in \cite{DunFre_RSMG} that the monstrous McKay-Thompson series 
\[
	T_m(\tau):=q^{-1}+\sum_{n>0}\left({\rm tr}_{V^{\natural}_n}m\right)q^n,
\]
obtained by taking the graded trace associated to the action of an element $m$ of the monster group on the monster module $V^{\natural}=\bigoplus_n V^{\natural}_n$, is also a Rademacher sum for every $m$ in the monster, of a very similar kind to that first considered by Rademacher \cite{Rad_FuncEqnModInv} in 1939. 

Umbral moonshine is an extension of the observations recalled here, relating $M_{24}$ to certain mock modular forms. Specifically, it is a family of 23 analogous correspondences indexed by the root systems of the even unimodular positive-definite lattices of rank $24$. In this setting, the case of $M_{24}$ corresponds to $24$ copies of the $A_1$ root system. One of the main conjectures of umbral moonshine \cite{UM,MUM}, still outstanding except for the case of $M_{24}$, is that the mock modular forms arising may all be constructed in a uniform manner via Rademacher sums or regularized Poincar\'e series of a suitable kind. Thus Rademacher sums for the $T_m$ and $H_g$ and Maass-Jacobi Poincar\'e series for the $\phi_g$ allow us to write down uniform constructions of all these functions and conjecturally all the functions of umbral moonshine may also be obtained in this way.

In \cite{DunFre_RSMG}, the identification of the $T_m$ as Rademacher sums has been used to formulate conjectures relating monstrous moonshine to three-dimensional quantum gravity, following earlier work \cite{Dijkgraaf2007,LiSonStr_ChGrav3D,MalWit_QGPtnFn3D,Man_AdS3PFnsRecon,Manschot2007,Moore2007,Witten2007} in the physics literature. Theorem \ref{thm:M24:chzn} and the main result of \cite{Cheng2011} indicate that quantum gravity may, ultimately, also play an important role in explicating the physical origins of Mathieu moonshine, and umbral moonshine more generally, assuming the conjectural construction of the umbral moonshine mock modular forms as Rademacher sums. 

As we have mentioned, there is as yet no known construction of the $M_{24}$-module $K$ in (\ref{eqn:intro-defnHg}). In particular, a physical interpretation of the coefficients of $H=H_e$, in which the role of $M_{24}$ is manifest, is still lacking. The same is true for the semi-holomorphic Maass-Jacobi form $\phi_e$ attached to the identity element of $M_{24}$, but we note the recent work \cite{2013arXiv1307.7717H} which gives a physical interpretation of the closely-related function$\left.\frac{1}{2\pi i}\frac{\partial}{\partial z}\phi_e(\tau;z)\right|_{z=0}$. It would be very interesting, especially given our construction of the $\phi_g$ as Poincar\'e series and the significance of Poincar\'e series in quantum gravity, to see what modification of the methods of \cite{2013arXiv1307.7717H} might furnish a physical interpretation of the functions $\phi_g$.
 
We have discussed the analogy between Theorem \ref{thm:M24:chzn} and the main result of \cite{Cheng2011}. It is worth noting that, on a technical level, our approach enjoys an advantage over that utilized in \cite{Cheng2011}, for the Maass-Jacobi theory employed here allows us to avoid much of the case-by-case analysis that was necessary there. Consequently, our treatment is significantly shorter. For this reason, it is an interesting problem to extend the methods here to the analogues of the mock modular forms $H_g$ that appear in the remaining 22 cases of umbral moonshine. The approach we pursue offers a promising path to establishing other cases of the main conjecture of umbral moonshine: that the mock modular forms arising may be constructed uniformly as Rademacher sums or regularized Poincar\'e series. In the course of our work, we further develop the theory of Maass--Jacobi forms initiated in \cite{MR2680205,MR2805582}, by considering subgroups of the modular group, certain multiplier systems, and larger ranges of weights. In particular, we establish larger regions of convergence for certain Maass--Jacobi Poincar\'e series. 

The paper is organized as follows. In \S\ref{sec:jac} we recall the definitions of Jacobi forms, skew-holomorphic Jacobi forms, Maass-Jacobi forms, and mock modular forms, and explain the relationships between these objects. In \S\ref{sec:MaaJacPoi} we generalize the construction of Poincar\'e series by Richter and the first author \cite{MR2680205, MR2805582} to include higher level, multipliers, and an extended range of weights. Part of the analysis in \S\ref{sec:MaaJacPoi} requires some facts about Gauss sums, which we recall in \S\ref{subsectiongauss}. In \S\ref{sec:decps}, we generalize a useful pairing of Richter and the first author \cite{MR2680205} to prove that the Maass-Jacobi forms we are interested in are determined by their principal parts. Finally, in \S\ref{sec:M24}, we define and characterize the semi-holomorphic Maass-Jacobi forms $\phi_g$ attached to the Mathieu group $M_{24}$ and identify them as Maass-Jacobi Poincar\'e series. We conclude with the proof of Theorem 1.1 and discuss its physical significance and analogy to the weak Jacobi form case. We note that the main difficulty lies in proving convergence of the Fourier expansions of these Poincar\'e series, which requires a subtle analysis.
\section*{Acknowledgements}
\noindent The authors are grateful to Miranda Cheng, Michael Griffin, Jeffrey Harvey, and Ken Ono for useful discussions related to the paper. 

\section{Jacobi forms, skew-holomorphic Jacobi forms, Maass-Jacobi forms, and mock modular forms}\label{sec:jac}

In this section, we recall basic facts about Jacobi forms, Maass-Jacobi forms, and mock modular forms.
\subsection{Jacobi forms and skew-holomorphic Jacobi forms} 

To give the relevant definitions, we require some notation.
Let $\Gamma^J:=\mbox{SL}_2(\Z)\ltimes \Z^2$ be the Jacobi group and for $N\in \mathbb{N}$, let $\Gamma_0^J (N) := \Gamma_0 (N) \ltimes \Z^2\subseteq\Gamma^J$. Recall that $\Gamma^J$ acts naturally on $\HH\times \C$ via
\[
	A(\t;z):=\left(\frac{a\tau+b}{c\tau+d}; \frac{z+\lambda\tau+\mu}{c\tau+d}\right)
\]
for $A=\left[\left(\begin{smallmatrix}a&b\\c&d\end{smallmatrix}\right), \left(\lambda, \mu\right)\right]\in \Gamma^J$, where $\mathbb H:=\left\{\tau\in\C\colon\operatorname{Im}(\tau)>0\right\}$. For $k, m\in\Z$, let
\begin{gather*}
	j_{k,m}(A,(\t;z)):=
	\left(c\tau+d\right)^{-k}
e^{2\pi i m \left(-\frac{c(z+\lambda\tau+\mu)^2}{c\tau+d}+\lambda^2\tau+2\lambda z\right)},\\
	j_{k,m}^{sk}(A,(\t;z)):=
	\left(c\overline{\tau}+d\right)^{1-k}\left|c{\tau}+d\right|^{-1}
e^{2\pi i m \left(-\frac{c(z+\lambda\tau+\mu)^2}{c\tau+d}+\lambda^2\tau+2\lambda z\right)}.
\end{gather*}
We use these to define a weight $k$ and index $m$ action of $\Gamma^J$ on smooth functions 

\noindent$\phi:\HH\times\C\to \C$ by setting for $A\in \Gamma^J$
\begin{gather*}
	\phi|_{k,m}A(\t;z):=j_{k,m}(A,(\t;z))\phi(A(\t;z)).
\end{gather*}
The action $\phi|^{sk}_{k,m}A$ is given similarly but with $j^{sk}_{k,m}$ in place of $j_{k,m}$. 

Next recall that a multiplier system of weight $k$ and index $m$ for $\Gamma_0^J(N)$ is a function $\sigma\colon\Gamma_0^J(N)\to \C$ such that for $A,B\in \G_0^J(N)$
\begin{gather*}
	\sigma(AB)j_{k,m}(AB,(\t;z))=\sigma(A)\sigma(B)j_{k,m}(A,B(\t;z))j_{k,m}(B,(\t;z)).
\end{gather*}
A skew-multiplier system for $\G_0^J(N)$ is defined similarly but with $j^{sk}_{k,m}$ in place of $j_{k,m}$. Given a multiplier system $\sigma$ of weight $k$ and index $m$ for $\G_0^J(N)$ let for $A\in \G_0^J(N)$ 
\begin{gather}\label{eqn:slashA}
\phi|_{\sigma,k,m}A:=\sigma(A)\phi|_{k,m}A,
\end{gather}
and define $\phi|^{sk}_{\sigma,k,m}A$ similarly when $\sigma$ is a skew-multiplier system. 
Note that a more common convention is to define multipliers so that $\sigma^{-1}$ appears in (\ref{eqn:slashA}). Set $\widehat{\Q}:=\Q\cup\{\infty\}$ and call the orbits of $\widehat{\Q}$ under the action $\G_0(N)$ the cusps of $\G_0(N)$. Observe that if $\sigma_0$ is a multiplier system of weight $k$ and index $m$ for $\G^J_0(N)$ and if $\rho\colon\G_0^J(N)\to\C^{\times}$  is a morphism from $\G_0^J(N)$ to the multiplicative group of $\C$, then the product $A\mapsto \sigma(A)=\rho(A)\sigma_0(A)$ is also a multiplier system of weight $k$ and index $m$ for $\G_0^J(N)$.  In this article we are concerned exclusively with the case that $\sigma_0$ is trivial and $\rho\colon\G^J_0(N)\to \C^{\times}$ takes the form $\rho=\rho_{N|h}$ for some $h\in\Z$, where
\begin{gather}\label{eqn:jac:rhoNh}
	\rho_{N|h}\left(\left(\begin{smallmatrix}a&b\\c&d\end{smallmatrix}\right), \left(\lambda, \mu\right)\right)
	:=
	e^{-\frac{2\pi icd}{Nh}}.
\end{gather}
Note that (\ref{eqn:jac:rhoNh}) defines a morphism $\G_0^J(N)\to \C^{\times}$ with kernel $\G_0^J(Nh)$ if $h\vert (24,N)$. 

We need the following generalization of holomorphic Jacobi forms, which allows poles at the cusps.
\begin{definition}
A weakly holomorphic Jacobi form of weight $k$ and index $m$ for $\G^J_0(N)$ with multiplier $\sigma$ is a holomorphic function $\phi\colon\HH\times \C\to \C$ such that the following conditions are satisfied:
\begin{enumerate}
\item We have $\phi|_{\sigma,k,m}A=\phi$ for all $A\in \G_0^J(N)$.
\item For any cusp $\alpha\in \G_0(N)\backslash \widehat{\Q}$ and any $\g\in \text{SL}_2(\Z)$ with $\g\infty\in\alpha$, the function $\phi|_{k,m}[\g,(0,0)]$ admits a Fourier expansion
\begin{gather*}
	\phi|_{k,m}\left[\g,(0,0)\right](\tau;z)=\sum_{\substack{n,r\in\Z\\\frac{4mn}{w}-r^2\gg - \infty}}c_{\phi,\alpha}\left(
		n,r\right)q^{\frac{n}{w}}\zeta^r
\end{gather*}
where $w$ is the width of $\alpha$.
\end{enumerate}
\end{definition} 

\noindent The coefficients $c_{\phi,\alpha}\left(n,r\right)$ depend upon the choice of $\g$, but only up to roots of unity. If $c_{\phi,\alpha}\left({n},r\right)=0$ for $n<0$, for every cusp $\alpha$, then $\phi$ is called a weak Jacobi form. If $c_{\phi,\alpha}\left({n},r\right)=0$ whenever $r^2-4m{n}/{w}>0$ for every cusp $\alpha$, then $\phi$ is called a holomorphic Jacobi form. If $c_{\phi,\alpha}(n,r)=0$ whenever $r^2-4mn/w\geq0$ for every cusp $\alpha$, then  $\phi$ is called a Jacobi cusp form. We denote the spaces of weakly holomorphic Jacobi forms, holomorphic Jacobi forms, Jacobi cusp forms, each of weight $k$ and index $m$ for $\Gamma_0^{J}(N)$ with multiplier $\sigma$, by $J_{\sigma,k,m}^{!}(N)$, $J_{\sigma,k,m}(N)$, and $J_{\sigma,k,m}^{\text{\upshape cusp}}(N)$, respectively.

The notion of weakly skew-holomorphic Jacobi forms  for $\G_0^J(N)$ with multiplier $\sigma$ is formulated similarly except that we require the Fourier expansion at any cusp $\alpha\in \G_0(N)\backslash\widehat{\Q}$ to take the form 
\begin{gather}\label{eqn:jac:skwjacfoucsp}
	\phi|^{sk}_{k,m}\left[\g,(0,0)\right](\tau;z)=\sum_{\substack{n,r\in\Z\\r^2-\frac{4mn}{w}\gg-\infty}}c_{\phi,\alpha}\left(n,r\right)e^{-\frac{\pi v}{m}\left(r^2-\frac{4mn}{w}\right)}q^{\frac{n}{w}}\zeta^r,
\end{gather}
where throughout we write $\tau= u+iv$. If the summation in (\ref{eqn:jac:skwjacfoucsp}) can be restricted to $r^2-4mn/w\geq 0$ for every $\alpha$, then $\phi$ is called a skew-holomorphic Jacobi form for $\G_0^J(N)$ and a cusp form in case $c_{\phi,\alpha}(n,r)=0$ whenever $r^2-4mn/w\leq 0$ for every cusp $\alpha$.

We denote the spaces of weakly skew-holomorphic Jacobi forms, skew-holomorphic Jacobi forms, and skew-holomorphic Jacobi cusp forms, each of weight $k$ and index $m$ for $\Gamma_0^{J}(N)$ with multiplier $\sigma$, by $J_{\sigma,k,m}^{sk!}(N)$, $J_{\sigma,k,m}^{sk}(N)$, and $J_{\sigma,k,m}^{\text{\upshape sk, cusp}}(N)$, respectively.

\subsection{Maass Forms and Mock Modular Forms}\label{sec:jac:mmf}

Maass forms were introduced by Maass and generalized by Bruinier and Funke in \cite{BruinierFunke} to allow growth at the cusps. Following their work, we define a harmonic (weak) Maass form of weight $k\in \frac12 \Z$ for a congruence subgroup $\Gamma$ as follows.
We first recall the usual weight $k$ hyperbolic Laplacian operator given by 
\begin{equation*}
\Delta_k:= -v^2\left( \frac{\partial^2}{\partial u^2}+\frac{\partial^2}{\partial v^2} \right) -ikv\left(  \frac{\partial}{\partial u}  +\frac{\partial}{\partial v} \right).
\end{equation*}
For half-integral $k$, we require that $\Gamma$ has level divisible by four. In this case, we also define $\varepsilon_d$ for odd $d$ by
\begin{equation*}
\varepsilon_d:= \left\{\begin{array}{lr}1 &\textrm{ if }d\equiv 1\pmod{4},\\i&\textrm{ if }d\equiv 3\pmod{4}.\end{array}\right.
\end{equation*}

We may now define harmonic weak Maass forms as follows. 
\begin{definition}\label{MaassDefn} A harmonic weak Maass form of weight $k$ on a congruence subgroup $\Gamma\subseteq\operatorname{SL}_2(\Z)$ (with level $4|N$ if $k\in\frac12+\Z$)  is any $\mathcal C^2$ function $F\colon \mathbb H\to \C$ satisfying:
\begin{enumerate}
\item For all $\gamma\in \Gamma,$ 
\[F(\gamma \tau) = \left\{ \begin{array}{rl} (c\tau+d)^{k}F(\tau)& \mathrm{if } ~k\in \Z,\\ \left(\frac{c}{d}\right)^{2k} \varepsilon_d^{-2k}(c\tau+d)^{k}F(\tau) & \mathrm{if }~ k\in \frac 12+\Z.
 \end{array} \right.\]
\item  We have that \[\Delta_k (F)=0.\]
\item There is a polynomial $P_F(q)=\sum_{n\leq0} C_F(n)q^n\in \C[q^{-1}]$ such that $F(\tau)-P_F(q)=O(e^{-\varepsilon v})$ for some $\varepsilon >0$ as $v\to +\infty$. We require analogous
conditions at all the cusps of $\Gamma$.
\end{enumerate}
\end{definition}
\noindent
We denote the space of weight $k$ harmonic weak Maass forms on $\Gamma$ by $H_k(\Gamma)$. \\
\noindent
{\it Three remarks.}

\noindent
1) 
The space $H_k\left(\Gamma_1(N)\right)$ corresponds to $H_k^+(\Gamma_1(N))$ in the notation of \cite{BruinierFunke}. Bruinier and Funke also defined a larger class of harmonic weak Maass forms by modifying condition (iii) above, however we only need the definition above.\\
\noindent
2) Since holomorphic functions are harmonic, weakly holomorphic modular forms are also harmonic weak Maass forms. 

\noindent
3)  Although the definition above only allows the Maass forms to have level divisible by four, we may analogously define them of arbitrary level by allowing a more general multiplier.

A key property of harmonic weak Maass forms is that they canonically split into a holomorphic piece and a non-holomorphic piece. Namely, if we define
$h(w):=e^{-w}\int_{-2w}^{\infty}e^{-t}t^{-k}dt,$ then we have that any $F\in H_k(\Gamma_1(N))$ (for $k\neq 1$) decomposes as $F=F^++F^-$, where 
 \[F^+(\tau)=\displaystyle\sum_{n\gg-\infty}c_F^+(n)q^n,\quad\quad\quad
F^-(\tau)=\displaystyle\sum_{n<0}c_F^-(n)h(2\pi n v)e(nu),\]
for some complex numbers $c_F^+(n),c_F^-(n)$. 
We refer to $F^+$ as the holomorphic part and $F^-$ as the non-holomorphic part of $F$. We call a $q$-series which is the holomorphic part of some harmonic Maass form a mock modular form. Another crucial operator in the theory of harmonic Maass forms is the $\xi$-operator $\xi_k:=2iv^k\cdot\overline{\frac{\partial}{\partial\overline{\tau}}}$, which has the useful property that it defines a map $\xi_k\colon H_k(\Gamma)\rightarrow S_{2-k}(\Gamma)$. Moreover, Bruinier and Funke \cite{BruinierFunke} proved that the above map is surjective, and following Zagier we define $\xi_k \left(F\right)$ to be the shadow of $F^+$. Finally, we recall that an elementary calculation shows that the non-holomorphic part $F^-$ is actually essentially a period integral of its shadow, specifically:
\[F^-(\tau)=-\left(-2i\right)^{k-1}\int_{-\overline{\tau}}^{i\infty}\frac{\xi_k\left( F(w)\right)}{(\tau+w)^k}dw.\]

\subsection{Maass-Jacobi forms}
We now turn to a special class of Maass-Jacobi forms introduced by Richter and the first author \cite{MR2680205}. 
For the general definition of Maass-Jacobi forms, we refer the reader to \cite{Bringmann21}. Define the Casimir operator by setting
\begin{equation*}
\begin{split}
C^{k,m} := &-2(\tau-\overline{\tau})^2\partial_{\tau\overline{\tau}}-(2k-1)(\tau-\overline{\tau})\partial_{\overline{\tau}}
+ \frac{(\tau-\overline{\tau})^2}{4\pi i m}\partial_{\overline{\tau}zz} \\
& + \frac{k(\tau-\overline{\tau})}{4\pi i m}\partial_{z\overline{z}}+ \frac{(\tau-\overline{\tau})(z-\overline{z})}{4\pi i m}\partial_{zz\overline{z}}-2(\tau-\overline{\tau})(z-\overline{z})\partial_{\tau\overline{z}}+k(z-\overline{z})\partial_{\overline{z}}\\
& +\frac{(\tau-\overline{\tau})^2}{4\pi im}\partial_{\tau\overline{z}\overline{z}}+ \left(\frac{(z-\overline{z})^2}{2}+\frac{k(\tau-\overline{\tau})}{4\pi i m}\right)\partial_{\overline{z}\overline{z}}+ \frac{(\tau-\overline{\tau})(z-\overline{z})}{4\pi im}\partial_{z\overline{z}\overline{z}}\,.\\
\end{split}
\end{equation*} 
Throughout, we note that the notation $\partial_{z\overline{zz}}$, for example, is shorthand for $\partial_z\partial_{\overline z}\partial_{\overline z}$ and $\partial_z$, for example, denotes $\frac{\partial}{\partial z}$.
Note that in the special case that the Jacobi form is holomorphic in $z$, this operator simplifies to
\[
C^{k,m}=-2\left(\tau-\overline{\tau}\right)^2 \partial_{\tau\overline{\tau}}-(2k-1)\left(\tau-\overline{\tau}\right)\partial_{\overline{\tau}}+\frac{\left(\tau-\overline{\tau}\right)^2}{4\pi im}\partial_{\overline{\tau} zz}.
\]
We essentially define a Maass-Jacobi form as a function which transforms as a Jacobi form and is in the kernel of the Casimir operator. More specifically, we make the following definition, where here and throughout we write $z=x+iy$.
\begin{definition}\label{semiholomorphicdefn}
A smooth function $\phi\colon\mathbb{H}\times \C\to\C$ is called a semi-holomorphic Maass-Jacobi form for $\G_0^J(N)$ with multiplier $\sigma$, weight $k$, and index $m$ if the following conditions hold:

\vspace{1ex}
(1) For all $A\in \G_0^J(N), \phi|_{\sigma,k, m} A=\phi$.

\vspace{1ex}

(2) We have $C^{k, m}(\phi)=0$.

\vspace{1ex}

(3) The function $\phi$ is holomorphic in $z$.

\vspace{1ex}

(4) We have that $\phi(\tau; z)=O(e^{ay} e^{\frac{2\pi my^2}{v}})$ as $v\to\infty$ for some $a>0$, and the same\\

\vspace{-.1in}
\noindent
\hspace{.38in} conditions also hold for $\phi|_{\sigma,k,m} A$ with $A\in\Gamma^J$.

\noindent
\end{definition}
The space of semi-holomorphic Maass-Jacobi forms for $\G_0^J(N)$ with 
multiplier $\sigma$, weight $k$, and index $m$ is denoted by $\widehat{\mathbb{J}}_{\sigma,k, m}(N)$.
Skew-holomorphic Jacobi forms and semi-holomorphic Maass-Jacobi forms are related via the following operator
\begin{equation*}
\xi_{k,m}:=\left(\frac{\tau-\overline{\tau}}{2i}\right)^{k-\frac{5}{2}}D_-^{(m)}.
\end{equation*}
Here
$$
D_-^{(m)}:=\left(\frac{\tau-\overline{\tau}}{2i}\right)\left(-(\tau-\overline{\tau})\partial_{\overline{\tau}}-(z-\overline{z})\partial_{\overline{z}}+ \frac{1}{4\pi m}\left(\frac{\tau-\overline{\tau}}{2i}\right)\partial_{\overline{z}\overline{z}}\right)
$$ 
is a ``lowering" operator. That is, if $\phi$ is a smooth function on $\HH\times\C$ and if $A\in\Gamma^J$, then  \cite{MR2680205}:
\begin{equation*}
D_-^{(m)}(\phi)\,\big|_{k-2,m} A =D_-^{(m)}\left(\phi\,\big|_{k,m} A\right) \,.
\end{equation*}
The operator $\xi_{k,m}$ maps semi-holomorphic Maass-Jacobi forms to 
weakly skew-holomorphic Jacobi forms of weight $3-k$ 
\begin{gather*}
	\xi_{k,m}\colon\widehat{\mathbb{J}}_{\sigma,k,m}(N)\to J_{\sigma,3-k,m}^{sk!}(N).
\end{gather*}
\noindent
It is not hard to see that the kernel of $\xi_{k,m}$ is $J_{\sigma, k, m}^! (N)$. 
We define $\widehat{\mathbb{J}}^{\text{\upshape cusp}}_{\sigma,k,m}(N)$ to be the preimage of $J_{\sigma,3-k,m}^{\text{\upshape sk,cusp}}(N)$ under $\xi_{k,m}$.

One can show \cite{MR2680205} that $\phi \in\widehat{\mathbb{J}}_{\sigma, k, m}(N)$ has an expansion of the form
\[
v^{\frac32-k}\sum\limits_{n, r\in\Z\atop{D=0}}c_{\phi}^0(n, r) q^n \zeta^r+\sum\limits_{n, r\in\Z\atop{D\ll\infty}}c_{\phi}^+ (n, r) q^n \zeta^r
+\sum\limits_{n, r\in\Z\atop{D\gg -\infty }}c_{\phi}^-(n, r) h\left(-\frac{\pi Dv}{2m}\right) e\left(\frac{iDv}{4m}\right)q^n \zeta^r,
\]
where $D:=r^2-4mn$. We call the second sum the holomorphic part of $\phi$ and the third sum the non-holomorphic part of $\phi$.
A similar expansion holds for $\phi|_{\sigma, k, m} A$ with $A\in\Gamma^J$.
Moreover, 
\begin{equation*}
\mathbb{P}_{\phi}(\tau;z)=\mathbb{P}_{\phi^+}(\tau;z):=\sum_{\substack{n,r\in\Z \\ D>0 }}
c_{\phi}^+(n,r)q^n\zeta^r
\end{equation*}
is called the principal part of $\phi$ (at infinity). Similarly, $\mathbb{P}_{\phi,\alpha}(\tau;z)$ denotes the principal part at the cusp $\alpha$. We let $\widehat{\mathbb{J}}^{\text{\upshape cusp},\infty}_{\sigma,k,m}(N)\subset\widehat{\mathbb{J}}^{\text{\upshape cusp}}_{\sigma,k,m}(N)$ consists of those $\phi$ for which $\mathbb{P}_{\phi,\alpha}=0$ unless $\alpha$ is the infinite cusp of $\Gamma_0 (N)$.

\section{Gauss sums}\label{subsectiongauss}

In this section, we recall some basic properties of and give some elementary formulas for Gauss sums. These explicit calculations are crucial for proving analytic continuations of the Poincar\'e series analyzed in \S\ref{sec:MaaJacPoi}. Consider the generalized quadratic Gauss sum, defined for $a,b,c\in\Z$ by

\[
G(a, b, c):=\sum_{n=0}^{|c|-1}e_c(an^2+bn),
\]
where $e_c(x):=e^{2\pi ix/c}$. The following lemma describes the most important properties of $G(a,b,c)$ that we need, each of which is well-known.
\begin{lemma}\label{Gausslemma}
For any $a,b,c\in\Z$, the following are true:
\begin{itemize}
\item[(i)]
If $b=0$  and $(a,c)=1$, then we have
\[
G(a, 0, c)=
\begin{cases}
0&\quad\text{ if } c\equiv 2\pmod{4},\\
\varepsilon_c\sqrt{c} \left(\frac{a}{c}\right)&\quad\text{ if } c\text{ is odd},\\
(1+i)\varepsilon_a^{-1}\sqrt{c}\left(\frac{c}{a}\right)&\quad\text{ if } 4|c.
\end{cases}
\]

\item[(ii)]
If $4|c, (a,c)=1,$ and $b$ is odd, then $G(a, b, c)=0$.
\item[(iii)]
For $(c, d)=1$
\[
G(a, b, cd)=G(ac, b, d) G(ad, b, c).
\]
\item[(iv)]
If $(a, c)>1, \text{then }G(a, b, c)=0$ unless $(a, c)|b$, in which case
\[
G(a, b, c)= (a, c) G\left(\frac{a}{(a, c)}, \frac{b}{(a, c)}, \frac{c}{(a, c)}\right).
\]
\end{itemize}
\end{lemma}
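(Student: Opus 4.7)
The plan is to handle the four parts in order of increasing difficulty: (iv), (iii), (ii), and finally the evaluation (i). Throughout, I will rely on the classical toolbox for quadratic exponential sums: change of variables modulo divisors, the Chinese Remainder Theorem, and pairing arguments that exploit the shift $n\mapsto n+c/2$.

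For (iv), write $e:=(a,c)$, $a=ea'$, $c=ec'$ and parameterize $n=c'k+r$ with $0\le r<c'$ and $0\le k<e$. Then $e_c(an^2)=e_{c'}(a'r^2)$ depends only on $r$, while $e_c(bn)=e_e(bk)\,e_c(br)$. The inner sum $\sum_{k=0}^{e-1}e_e(bk)$ equals $e$ when $e\mid b$ and $0$ otherwise; in the former case, substituting $b=eb'$ gives $e_c(br)=e_{c'}(b'r)$ and the remaining sum is exactly $G(a',b',c')$. Part (iii) is the Chinese Remainder Theorem: the map $(m_1,m_2)\mapsto dm_1+cm_2\pmod{cd}$ is a bijection $\Z/c\Z\times\Z/d\Z\to\Z/cd\Z$, and substituting this decomposition into the exponential sum separates it into the claimed product (the cross terms vanish modulo $cd$).

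For (ii), replace $n$ by $n+c/2$ in the defining sum. Since $4\mid c$, the quadratic contribution $e_c\bigl(a(c/2)^2+anc\bigr)=e^{2\pi i ac/4}\cdot 1=1$, while the linear contribution is $e_c(bc/2)=(-1)^b=-1$ because $b$ is odd. Hence $G(a,b,c)=-G(a,b,c)$, forcing vanishing. The $c\equiv 2\pmod 4$ subcase of (i) runs along the same lines: writing $c=2c_0$ with $c_0$ odd, the coprimality $(a,c)=1$ forces $a$ odd, and the shift $n\mapsto n+c/2$ produces a factor $e^{2\pi i ac/4}=(-1)^{ac_0}=-1$.

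The substance of (i) is the evaluation when $(a,c)=1$ and either $c$ is odd or $4\mid c$. For odd $c$, I would start from Gauss's classical identity $G(1,0,c)=\varepsilon_c\sqrt{c}$ and pass to general $a$ via the change of variables $n\mapsto a^{-1}n$ (where $aa'\equiv 1\pmod c$): this converts $G(a,0,c)$ into $G(a^{-1},0,c)$ up to noting that the sum only sees $n^2$, and a standard manipulation using the decomposition of $\Z/c\Z$ into $\{0\}$ and the quadratic residues/nonresidues produces the Legendre (Jacobi) symbol $\left(\frac{a}{c}\right)$. For $4\mid c$, I would apply (iii) to factor $c=2^s m$ with $m$ odd and $s\ge 2$, reducing the problem to evaluating a purely $2$-adic Gauss sum, and then invoke the classical explicit formula for $G(a,0,2^s)$; the factor $(1+i)\varepsilon_a^{-1}$ and the reciprocity-type symbol $\left(\frac{c}{a}\right)$ emerge naturally from the $2$-adic piece, combined with quadratic reciprocity to match the odd-part contribution. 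The main obstacle is bookkeeping: keeping the $\varepsilon$-factors and Jacobi symbols consistent across the cases odd $a$ versus even $a$, and $s=2$ versus $s\ge 3$. Since all of these identities are standard (e.g.\ they underlie the Landsberg--Schaar relation), I would cite a standard reference on quadratic Gauss sums rather than reproduce every subcase.
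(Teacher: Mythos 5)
Your proposal is correct, but it is worth noting that the paper itself supplies \emph{no} proof of this lemma: it is introduced with the remark that each property is ``well-known'' and is used without further justification (the paper's bibliography does contain Berndt--Evans--Williams, \emph{Gauss and Jacobi sums}, which is cited later for the Gauss sum inversion formula and covers all four parts). So your write-up is strictly more detailed than the source. Your arguments for (iii) and (iv) --- the CRT splitting $(m_1,m_2)\mapsto dm_1+cm_2$ with the cross term $2acdm_1m_2\equiv 0\pmod{cd}$, and the parameterization $n=c'k+r$ with the inner geometric sum detecting $(a,c)\mid b$ --- are exactly the standard ones and are complete as written. The shift $n\mapsto n+c/2$ for (ii) and for the $c\equiv 2\pmod 4$ subcase of (i) is also correct (periodicity of the summand mod $c$ makes the shift legitimate, $e_c\bigl(a(c/2)^2\bigr)=e^{2\pi iac/4}$ is $1$ when $4\mid c$ and $(-1)^{ac_0}=-1$ when $c=2c_0$ with $c_0$ odd, and $e_c(bc/2)=(-1)^b$). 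The only looseness is in the evaluation part of (i): the residue/nonresidue counting identity $\#\{n:n^2\equiv t\}=1+\left(\frac{t}{c}\right)$ is valid only for prime $c$, so for general odd $c$ you genuinely need to route through the multiplicativity of (iii) and the prime-power case before the Jacobi symbol appears; since you explicitly defer these classical evaluations (Gauss's sign determination and the $2$-adic formula) to a standard reference, this is a presentational quibble rather than a gap.
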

For the proof of Theorem \ref{AnaC}, it is very handy to have explicit formulas for the Gauss sums $G\left(2\overline{d}, -r'+\overline{d}, c\right)$, where $\bar d$ is a multiplicative inverse of $d$ modulo $c$. 
For this, we introduce the following auxiliary function, where $\nu:=\operatorname{ord}_2(c)$:
\[\beta_{c,d,r'}:=\begin{cases}1&\text{if }\nu=0, \\ 0&\text{if }\nu\geq1\text{ and }r'\text{ is even,} \\  2&\text{if }\nu=1\text{ and }r'\text{ is odd}, \\  4&\text{if }\nu=2, r'\text{ is odd},\text{ and }\overline{d} \not\equiv r^{\prime} \pmod{4}, \\  0&\text{if }\nu=2, r'\text{ is odd},\text{ and }\overline{d}\equiv r^{\prime} \pmod{4},\\  0&\text{if }\nu\geq3\text, r'\text{ is odd},\text{ and }\overline{d} \not\equiv r^{\prime} \pmod{4},\\ \alpha&\text{if }\nu\geq3, r'\text{ is odd},\text{ and }\overline{d} \equiv r^{\prime} \pmod{4},    \end{cases}\]
where $\alpha:=(1+i) e_{2^{\nu-1}}( -\bar{c}^{\prime}d\frac{1}{16}(\overline{d}-r^{\prime})^2)\varepsilon_{dc^{\prime}}^{-1}\sqrt{2^{\nu-1}}\left(\frac{2^{\nu-1}}{c^{\prime}d}\right)$. The general formula for the Gauss sum is given in the following proposition, whose proof uses Lemma \ref{Gausslemma} and is a long, but elementary calculation.

\begin{proposition}\label{GaussSumComputations}
Let $a,b,c\in\Z$ and $ c = 2^{\nu} c^{\prime}$ with $\nu \in \N_0$ and $c^{\prime}$ odd. Then we have
\begin{equation}\label{foofacts}G\left(2\overline{d},-r^{\prime}+\overline{d},c\right)=e_{c^{\prime}}\left(-\overline{2}^{\nu+3}d\left(\overline{d}-r^{\prime}\right)^2\right)\varepsilon_{c^{\prime}}\sqrt{c^{\prime}}\left(\frac{2^{\nu+1}d}{c^{\prime}}\right)\beta_{c,d,r'}.\end{equation}

\end{proposition}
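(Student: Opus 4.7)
The plan is to split the modulus $c = 2^\nu c'$ using the multiplicative property (iii) of Lemma \ref{Gausslemma}, and then evaluate each factor separately: the odd factor by completing the square directly, and the $2$-power factor by a careful case analysis.

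First I apply Lemma \ref{Gausslemma}(iii) with the coprime decomposition $c = 2^\nu \cdot c'$, which gives
\[
G\left(2\bar{d}, -r' + \bar{d}, c\right) = G\left(2^{\nu+1}\bar{d}, -r' + \bar{d}, c'\right) \cdot G\left(2c'\bar{d}, -r' + \bar{d}, 2^{\nu}\right).
\]
For the first factor, since $c'$ is odd and $\gcd(2^{\nu+1}\bar{d}, c') = 1$, I will complete the square modulo $c'$: the shift $n \mapsto n + \overline{2^{\nu+2}\bar{d}}(\bar{d} - r')$ (inverses taken mod $c'$) transforms the exponent into $2^{\nu+1}\bar{d}(n+N)^2 - \overline{2}^{\,\nu+3}d(\bar{d}-r')^2$. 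Translating the index of summation and applying Lemma \ref{Gausslemma}(i) to the residual $G(2^{\nu+1}\bar{d}, 0, c')$ produces the full main factor $e_{c'}(-\overline{2}^{\,\nu+3}d(\bar{d}-r')^2)\varepsilon_{c'}\sqrt{c'}\left(\tfrac{2^{\nu+1}d}{c'}\right)$, using the identity $\left(\tfrac{\bar{d}}{c'}\right) = \left(\tfrac{d}{c'}\right)$, which follows from $d\bar{d}\equiv 1 \pmod{c'}$.

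Next I will evaluate the $2$-power factor $T := G(2c'\bar{d}, -r'+\bar{d}, 2^\nu)$ and show that it equals $\beta_{c,d,r'}$. Note that $\gcd(d,c) = 1$ forces $d$, $\bar d$, $c'$, and $c'\bar d$ to all be odd. The case $\nu = 0$ gives $T = 1$ trivially. For $\nu \geq 1$ one has $\gcd(2c'\bar{d}, 2^\nu) = 2$, so Lemma \ref{Gausslemma}(iv) forces $T = 0$ whenever $\bar d - r'$ is odd (that is, $r'$ even); otherwise it produces $T = 2\,G(c'\bar d, (\bar d - r')/2, 2^{\nu-1})$. The $\nu = 1$ case yields $T = 2$ immediately. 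For $\nu = 2$, a direct two-term evaluation of the residual Gauss sum modulo $2$ splits into the subcases $\bar d \equiv r' \pmod 4$ and $\bar d \not\equiv r' \pmod 4$, giving $T = 0$ and $T = 4$ respectively. For $\nu \geq 3$, Lemma \ref{Gausslemma}(ii) kills the inner Gauss sum when $(\bar d - r')/2$ is odd. In the remaining subcase $4 \mid \bar d - r'$, the linear coefficient is itself even; since $c'\bar d$ is invertible modulo $2^{\nu-1}$, I complete the square one more time, extracting the exponential $e_{2^{\nu-1}}(-\overline{c'}d(\bar d - r')^2/16)$ and reducing to $G(c'\bar d, 0, 2^{\nu-1})$, which is evaluated by the $4\mid c$ clause of Lemma \ref{Gausslemma}(i).

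The main technical obstacle is the bookkeeping in this last subcase, in particular matching my computed Gauss sum with the stated form of $\alpha$: I need $\varepsilon_{c'\bar d}^{-1} = \varepsilon_{dc'}^{-1}$ and $\left(\tfrac{2^{\nu-1}}{c'\bar d}\right) = \left(\tfrac{2^{\nu-1}}{c'd}\right)$. Both equalities follow from $(c'\bar d)(c'd) = (c')^2 d\bar d \equiv 1 \pmod{2^\nu}$, which forces $c'\bar d \equiv (c'd)^{-1} \pmod{2^\nu}$ and hence $c'\bar d \equiv c'd \pmod 4$ (equality of $\varepsilon$-factors) and agreement of the Jacobi symbol $\left(\tfrac{2}{\cdot}\right)$, which depends only on residues mod $8$. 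Once these identifications are made, collecting the factors from the odd-modulus and $2$-power calculations assembles the claimed formula for $G(2\bar d, -r' + \bar d, c)$.
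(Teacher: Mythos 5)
Your strategy --- factor $c=2^\nu c'$ via Lemma \ref{Gausslemma}(iii), complete the square on the odd part, and run a case analysis on the $2$-power part using (i), (ii), (iv) --- is exactly the calculation the paper has in mind (the paper omits it, describing it only as a long but elementary calculation using Lemma \ref{Gausslemma}), and your treatment of the odd-modulus factor and of the cases $\nu\le 2$ is correct. One small point first: your justification of $\varepsilon_{c'\bar d}=\varepsilon_{c'd}$ and $\left(\frac{2^{\nu-1}}{c'\bar d}\right)=\left(\frac{2^{\nu-1}}{c'd}\right)$ rests on the congruence $(c')^2 d\bar d\equiv 1\pmod{2^\nu}$, which is false for $\nu\ge 4$ (take $c'=3$, $\nu=4$); what you actually have, and all you need, is $(c'\bar d)(c'd)\equiv 1\pmod 8$, which together with $x^2\equiv 1\pmod 8$ for odd $x$ gives $c'\bar d\equiv c'd\pmod 8$.

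More substantively: in the case $\nu\ge 3$ with $4\mid \bar d-r'$, your own computation does \emph{not} assemble to the printed formula. Lemma \ref{Gausslemma}(iv) gives $T=2\,G\bigl(c'\bar d,(\bar d-r')/2,2^{\nu-1}\bigr)$, and after completing the square and applying (i) this equals $2\alpha$, not $\alpha$; the overall factor of $2$ coming from step (iv) is absorbed into $\beta_{c,d,r'}$ in the cases $\nu=1,2$, but not in the displayed $\alpha$. A direct check confirms your computation rather than the statement: for $c=8$, $d=\bar d=1$, $r'=1$ one has $G(2,0,8)=4(1+i)$, while the right-hand side of \eqref{foofacts} evaluates to $2(1+i)$. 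So the printed $\alpha$ is missing a factor of $2$ (equivalently, $\sqrt{2^{\nu-1}}$ should read $\sqrt{2^{\nu+1}}$). Your final sentence asserts that collecting the factors ``assembles the claimed formula''; had you actually carried out that bookkeeping you would have hit this discrepancy. You should either flag the typo explicitly or state and prove the corrected formula --- as written, the claim that your computation yields \eqref{foofacts} verbatim fails in this case.
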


 \section{Maass-Jacobi Poincar\'e series}\label{sec:MaaJacPoi}
In this section, we generalize the construction of Poincar\'e series by Richter and the first author \cite{MR2680205, MR2805582} to include higher level, multipliers, and an extended range of weights. 
The main difficulty lies in showing convergence.
To define these series, let $M_{\nu,\mu}$ be the usual $M$-Whittaker function, which is a solution to the differential equation
\begin{equation} \label{WhittakerDiff}
\partial_w^2f(w)+
 \left(-\frac{1}{4}+\frac{\nu}{w}+\frac{\frac{1}{4}-\mu^2}{w^2}\right)f(w)=0.
\end{equation}
For $s\in\C$, $\kappa\in\frac12\Z$, and $t \in \R \setminus \{0 \}$, define 
\begin{equation*} 
\mathcal{M}_{s,\kappa} (t):=|t|^{-\frac{\kappa}{2} } \, M_{\text{sgn}(t)\frac{\kappa}{2} ,s-\frac12}(|t|)
\end{equation*}
and 
$$
\phi_{k,m,s}^{(n,r)}(\tau;z):= \mathcal{M}_{s,k-\frac12}
\left(-\frac{\pi Dv}{m}\right)\,
e\left(
rz+\frac{ir^2v}{4m} +nu
\right).
$$
Lemma $1$ of \cite{MR2680205} shows that this function is an eigenfunction of the operator  $ C^{k,m}$  
with eigenvalue
$-2s(1-s)- \frac12\left(k^2-3k +\frac{5}{4}\right) $. For $N$ a positive integer and $h | (N,24)$, we consider the Poincar\'e series
\begin{equation}
\label{Poincare-def}
P_{k,m,N|h,s}^{(n,r)}
(\tau;z) := \frac{2}{\Gamma(2s)} \sum_{A\in\Gamma_{\infty}^J\backslash\Gamma_0^J (N)}\, \left.
\phi_{k,m,s}^{(n,r)} 
\right|_{\sigma,k,m}  A    (\tau;z),
\end{equation}
where $\G$ is the usual gamma function, $\sigma=\rho_{N|h}$ (see (\ref{eqn:jac:rhoNh})), and
$\Gamma_{\infty}^J:=\left\{\left[\left(\begin{smallmatrix}1 & \eta\\
0 &1 \end{smallmatrix}\right),(0,n)\right]\,\,|\,\,\eta,n\in\Z \right\}$. 
Note that we choose a slightly different normalization than in  \cite{MR2680205}.
The estimate
$$
\mathcal{M}_{s,k-\frac12}(t) \ll t^{ \text{Re}(s)-\frac{2k-1}{4} }
\qquad (t \to 0)
$$
yields that $P_{k,m,N|h,s}^{(n,r)}$ is absolutely and uniformly convergent for Re$(s)>\frac54$.
In the cases $s \in \left\{ \frac{k}{2}-\frac14,\frac54-\frac{k}{2} \right\}$, the series $P_{k,m,N|h,s}^{(n,r)}$ are annihilated by $C^{k,m}$. We are particularly interested in the case $k=1$, in which the defining sum is not convergent.

We give the Fourier expansion of $P_{k,m,N|h,s}^{(n,r)}$ in the next theorem after introducing a modified $W$-Whittaker function, and we use this to analytically continue our function. For this, we require further notation.
For $s\in \C$, $\kappa\in\frac12\Z$, and $t\in \R \setminus \{0\}$, set
\begin{equation*} 
\mathcal{W}_{s,\kappa}(t):=
 |t|^{-\frac{\kappa}{2}}
 W_{\text{sgn}(t) \frac{\kappa}{2},\,s-\frac{1}{2}}(|t|).
\end{equation*}
Here $W_{\nu,\mu}$ denotes the usual $W$-Whittaker function, which is also a solution to the differential equation (\ref{WhittakerDiff}).  
Moreover, we define the theta functions 
\begin{gather}
\vartheta_{\kappa,m}^{(r)} \left( \tau; z\right) := \label{eqn:MaaJacPoi:vartheta}
\sum_{\lambda\in\Z} q^{\lambda^2 m} \zeta^{2m\lambda} \left( q^{r\lambda} \zeta^r + (-1)^{\kappa} q^{-r\lambda} \zeta^{-r}\right)
\end{gather}
and the {Jacobi-Kloosterman sums}
\begin{equation*}
K_c(n,r,n',r'; N|h):= e_{2mc}\left(-r r' \right) \sum_{\substack{   d \hspace{-1ex}\pmod{c}^* \\ \lambda  \hspace{-1ex}\pmod{c} } }e^{-\frac{2\pi icd}{Nh}}
e_c \left(
\bar{d} m \lambda^2 +n' d- r' \lambda + \bar{d} n+  \bar{d} r \lambda
\right).
\end{equation*}
Here the $*$ in the summation means that the sum on $d$ only runs over those $d$ modulo $c$ that are coprime to $c$. 

\begin{theorem} \label{sPoincareThm}
We have for $\mathrm{Re}(s)>\frac54$
\begin{equation*} 
P_{k,m,N|h,s}^{(n,r)}(\tau;z)  =  \frac{2}{\Gamma(2s)} q^n
   \mathcal{M}_{s,k-\frac12} \left(-\frac{\pi D v }{m}  \right)
e\left(\frac{i D v}{4m}  \right)
 \vartheta_{k,m}^{(r)} (\tau;z) +
  \sum_{n', r' \in\Z} c_{v,s}(n',r')q^{n'}\, \zeta^{r'},
\end{equation*}
where
$$
c_{v,s}(n',r'):=  b_{v,s}(n',r') +(-1)^k b_{v,s}(n',-r'),
$$
with $b_{v,s}(n',r')$ given as follows ($D':=r'^2-4n'm$):
\begin{enumerate}
\item If $DD'>0$, then $b_{v,s}(n',r')$ equals
\begin{multline*}
  \hspace{5.5ex}
 \frac{2\sqrt{2} \pi i^{-k}  m^{-\frac12}}{\Gamma\left(s-\mathrm{sgn}(D')\frac{2k-1}{4}
 \right)}
  \left(\frac{D'}{D}\right)^{\frac{k}{2}- \frac{3}{4}}
  e^{-\frac{\pi D' v}{2m}} 
  \mathcal{W}_{s,k-\frac12} \left(-\frac{\pi D'v}{m} \right)
  \\
  \times
  \sum_{\substack{c>0\\N|c}}
  c^{-\frac32}K_c(n,r,n',r'; N|h)
  J_{2s-1} \left(\frac{\pi \sqrt{D'D}}{mc}  \right)
,
\end{multline*}
where $J$ is the usual $J$-Bessel function. 
\item If $D'=0$, then $b_{v,s}(n',r')$ equals
 $$
 v^{ \frac{5-2k}{4}-s}
  \frac{2}{ \Gamma(2s) \Gamma\left(s+\frac{2k-1}{4} \right)\Gamma\left(s-\frac{2k-1}{4} \right)}
a_s\left(n', r' \right),
  $$
  where $a_s(n',r')$ is holomorphic for $\operatorname{Re}(s)>\frac54$.
	\item   If $DD'<0$, then $b_{v,s}(n',r')$ equals
  \begin{multline*}
\hspace{5.5ex} 2\sqrt{2} \pi i^{-k}  m^{-\frac12}
 \frac{1}{\Gamma\left(s-\mathrm{sgn}(D')\frac{2k-1}{4}
 \right)}
  \left(\frac{|D'|}{|D|}\right)^{\frac{k}{2}- \frac{3}{4}}
   e^{-\frac{\pi D' v}{2m}}
  \mathcal{W}_{s,k-\frac{1}{2}}\left(\frac{-\pi D'v}{m} \right)
  \\
  \times \sum_{\substack{c>0\\N|c}}
  c^{-\frac32}K_c(n,r,n',r'; N|h)
  I_{2s-1} \left(\frac{\pi \sqrt{|D'D| }}{mc}  \right),
\end{multline*}
where $I$ is the usual $I$-Bessel function.
 \end{enumerate}
\end{theorem}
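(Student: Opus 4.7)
\medskip

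\noindent\textbf{Proof proposal.} The plan is to carry out the standard unfolding of the Poincar\'e series, splitting the coset sum according to whether the representative has $c=0$ or $c>0$. Absolute convergence for $\operatorname{Re}(s)>\tfrac54$ is built in from the hypothesis, so all manipulations below are legitimate term by term.

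First I would identify the $c=0$ cosets in $\Gamma_\infty^J\backslash\Gamma_0^J(N)$. These are represented by $[\pm I,(\lambda,0)]$ for $\lambda\in\Z$ (the parabolic on the left absorbs $T$-translations and the $(0,n)$ shifts in $z$, while $\pm I$ give two distinct cosets). Applying the index-$m$ slash with $[I,(\lambda,0)]$ to $\phi_{k,m,s}^{(n,r)}$ and using that $D=r^2-4mn$ is invariant under $(n,r)\mapsto (n+r\lambda+m\lambda^2,\, r+2m\lambda)$, one recovers a common factor $\mathcal M_{s,k-1/2}(-\pi Dv/m)\,e(iDv/(4m))$ multiplying $\sum_\lambda q^{n+m\lambda^2+r\lambda}\zeta^{r+2m\lambda}$. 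The $[-I,(\lambda,0)]$ cosets contribute the conjugate piece weighted by $(-1)^k$, and together they assemble into precisely $q^n\,\vartheta^{(r)}_{k,m}(\tau;z)$, giving the ``diagonal'' term in the theorem.

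For $c>0$, I would parametrize the remaining cosets by quadruples $(c,d,\lambda,\mu)$ with $N\mid c$, $c>0$, $d\pmod c$ coprime to $c$, $\lambda\pmod c$, and $\mu\in\Z$ (one checks that left multiplication by $\Gamma_\infty^J$ acts as $(a,b)\to(a+nc,b+nd)$ on the top row and shifts $(\lambda,\mu)\to(\lambda+nc,\mu+nd)$ in the Heisenberg part, so these choices give a fundamental set). Writing out $\phi_{k,m,s}^{(n,r)}\big|_{\sigma,k,m}[M,(\lambda,\mu)]$ using $M(\tau)=a/c-1/(c^2(\tau+d/c))$ and the index-$m$ cocycle, one performs Poisson summation in $\mu$ to produce the Fourier variable $r'$, which then forces the congruence conditions that recombine the finite sums over $d\pmod c$ and $\lambda\pmod c$ into exactly the Jacobi--Kloosterman sum $K_c(n,r,n',r';N|h)$; the multiplier factor $e^{-2\pi icd/(Nh)}$ from $\sigma=\rho_{N|h}$ appears precisely because the slash by $M$ contributes $\sigma(M)=e^{-2\pi icd/(Nh)}$.

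What remains is the $\tau$-integral coming from the Poisson step, which is the technical heart of the argument. After completing the square in the $\lambda$-variable and shifting contours, the integral reduces to a classical Whittaker-to-Bessel identity of the form
\[
\int_{\R}(x+iv_0)^{-k}|x+iv_0|^{k-2s}\,\mathcal M_{s,k-1/2}\!\left(\tfrac{-\pi D v_0}{m|x+iv_0|^2}\right)e(-n'x-\cdots)\,dx
\]
evaluating to a product of a $W$-Whittaker function in the target variable $v$ and a Bessel function of argument $\pi\sqrt{|DD'|}/(mc)$. The sign of $DD'$ governs whether one obtains $J_{2s-1}$ (for $DD'>0$) or $I_{2s-1}$ (for $DD'<0$), while the degenerate case $D'=0$ produces the auxiliary holomorphic function $a_s(n',r')$; the power $(|D'|/|D|)^{k/2-3/4}$, the exponential $e^{-\pi D'v/(2m)}$, and the Gamma factor $\Gamma(s-\operatorname{sgn}(D')(2k-1)/4)^{-1}$ all arise from the normalization of this integral in terms of the $\mathcal W$-Whittaker function. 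Multiplying by $c^{-3/2}$ (from the Jacobian of the unfolding) and summing over $c$ gives the displayed expansion. Finally, symmetrizing $(n',r')\leftrightarrow(n',-r')$ under the $[-I,\cdot]$ cosets produces the $(-1)^k$ summand in the definition of $c_{v,s}(n',r')$.

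The main obstacle is this Whittaker-to-Bessel integral evaluation, since one must track branches of the Whittaker function, the sign of $D'$, and the correct normalization constants across three separate regimes ($DD'>0$, $DD'<0$, $D'=0$); the holomorphicity statement for $a_s$ in the degenerate case also requires a separate analysis of the convergence of the residual integral.
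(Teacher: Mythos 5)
Your outline follows the same route as the paper, which in fact supplies no new argument at this point: the proof in the text is a one-line reduction to Theorem 4 of \cite{MR2680205} (the case $N=1$), together with the observation that the multiplier $\rho_{N|h}$ depends only on $d\pmod{c}$ --- which is exactly what makes the Jacobi--Kloosterman sum well defined and lets the level-one unfolding go through verbatim on $\Gamma_0^J(N)$. Your treatment of the $c=0$ cosets $[\pm I,(\lambda,0)]$ producing $q^n\vartheta^{(r)}_{k,m}$, and the reduction of the $c>0$ part to Kloosterman sums times a Whittaker-to-Bessel integral, are the standard steps of that computation.

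One step as written would fail and needs repair: the claimed fundamental set for the $c>0$ cosets. Left multiplication by $[T^{n},(0,\kappa)]\in\Gamma^J_\infty$ sends $[M,(\lambda,\mu)]$ to $[T^nM,(\lambda+\kappa c,\mu+\kappa d)]$; it changes the top row of $M$ and translates $(\lambda,\mu)$ by multiples of $(c,d)$, but it does not change $d$ at all. Hence for fixed $c$ the cosets are indexed by \emph{all} $d\in\Z$ with $(d,c)=1$, together with $(\lambda,\mu)\in\Z^2/\Z(c,d)$ (for which $\lambda\pmod{c}$, $\mu\in\Z$ is indeed a valid transversal), not by $d\pmod{c}$. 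The residual freedom $d\mapsto d+c$ is precisely what unfolds, against the extraction of the $n'$-th Fourier coefficient, into the integral over $x\in\R$ that you display; it is this translation sum, not the Poisson summation in $\mu$ (which produces only $r'$ and a Gaussian integral in the $z$-direction, recombining with the $\lambda$- and $d$-sums into $K_c$), that is responsible for the Whittaker-to-Bessel integral. With that correction, and granting the classical evaluation of that integral in the three regimes $DD'>0$, $DD'<0$, $D'=0$, your sketch coincides with the proof the paper is citing.
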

\begin{proof}
Since the proof of Theorem \ref{sPoincareThm} is very similar to the case $N=1$, which appears as Theorem 4 in \cite{MR2680205}, noting that the multiplier given in \eqref{eqn:jac:rhoNh} only depends on $d\pmod{c}$, we omit it here.
\end{proof}

We next specialize to $s=\frac54 -\frac{k}2$ ($k<0$) for $D>0$.
\begin{corollary}\label{4.2}
For $D>0$ and $k<0$, the functions $P_{k,m,N|h, \frac54-\frac{k}{2}}^{(n,r)}$ are elements of $\widehat{\mathbb{J}}_{\rho_{N| h}, k, m}^{\text{\upshape cusp}, \infty}(N)$ and have Fourier expansions of the form 
\begin{equation*}
\begin{split}
P_{k,m, N|h, \frac54 - \frac{k}2}^{(n,r)}(\tau;z)  = &
 2 q^n  \left( 1-\frac{1}{\Gamma\left(\frac32 -k \right)} \Gamma \left( \frac32 -k, \frac{\pi D v}{m} \right)\right) 
 \vartheta_{k,m}^{(r)} (\tau;z) +
 \sum_{\substack{n', r' \in\Z \\D'<0}} c_{n,r}^{(k)}(n',r') q^{n'}\, \zeta^{r'}  \\ 
 & 
 - 
 \frac{1}{k-\frac32} 
   \left( \frac{\pi D}{m}\right)^{k-\frac32}
 \sum_{\substack{n', r' \in\Z \\ D'>0}} 
  c_{n,r}^{(k)}(n',r')   \Gamma\left(\frac32-k, \frac{\pi D' v}{m} \right)
  q^{n'}\, \zeta^{r'}\\
& + \sum_{\substack{n',r'\in\Z\\D'=0}} c_{n,r}^{(k)} \left( n', r'\right) q^{n'} \zeta^{r'} .
 \end{split}
\end{equation*}

Here $\Gamma(\alpha, x)$ is the usual incomplete Gamma-function, and
$$
c_{n,r}^{(k)}(n',r'):=  b_{n,r}^{(k)}(n',r') +(-1)^k b_{n,r}^{(k)}(n',-r')
$$
with 
$$ b_{n,r}^{(k)}(n',r')=
\left\{\begin{array}{ll} 
  \frac{2 \sqrt{2} \pi i^{-k}  m^{-\frac12}
  \left(\frac{D'}{D}\right)^{\frac{k}{2}- \frac{3}{4}}}{\Gamma \left(\frac32 -k\right)}
\mbox{$\displaystyle  \sum_{c>0\atop{N|c}}$}
  c^{-\frac32}K_c(n,r,n',r'; N|h) 
  J_{\frac32-k} \left(\frac{\pi \sqrt{D'D }}{mc}  \right)
& \text{ if }  D'>0,\\ 
2 \sqrt{2} \pi i^{-k} m^{-\frac12} \left(\frac{|D'|}{D}\right)^{\frac{k}{2}- \frac{3}{4}}
\mbox{$\displaystyle  \sum_{c>0\atop{N|c}}$} 
  c^{-\frac32}K_c(n,r,n',r'; N|h) 
  I_{\frac32-k} \left(\frac{\pi \sqrt{|D'|D }}{mc}  \right)& \text{ if } D'<0,
  \\
  \frac{2}{\Gamma(1-k)\Gamma\left(\frac52-k\right)}\left(a_s\left(n',r'\right)-a_s\left(n',-r'\right)\right)&\text{ if }D'=0.
\end{array}
\right.
$$
In particular, the principal part of $P_{k,m,N|h, \frac54 - k}^{(n,r)}$ equals 
$2 q^n 
 \vartheta_{k,m}^{(r)} (\tau;z)$.
\end{corollary}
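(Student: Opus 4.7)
The plan is to specialize the Fourier expansion of Theorem \ref{sPoincareThm} at $s = \frac{5}{4} - \frac{k}{2}$ and reduce the resulting Whittaker factors to incomplete Gamma functions. Since $k < 0$, we have $\mathrm{Re}(s) > \frac{5}{4}$, so Theorem \ref{sPoincareThm} applies directly: the Poincar\'e series $P^{(n,r)}_{k,m,N|h,\frac{5}{4} - \frac{k}{2}}$ converges absolutely, and at this value of $s$ the Casimir eigenvalue $-2s(1-s) - \frac{1}{2}(k^2 - 3k + \frac{5}{4})$ vanishes, so by Lemma~1 of \cite{MR2680205} the sum is annihilated by $C^{k,m}$. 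The transformation law and growth bounds required by Definition \ref{semiholomorphicdefn} follow from the construction of the series, placing $P^{(n,r)}_{k,m,N|h,\frac{5}{4} - \frac{k}{2}} \in \widehat{\mathbb{J}}_{\rho_{N|h}, k, m}(N)$.

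The core of the proof is the evaluation of the Whittaker factors at this special parameter. At $s = \frac{5}{4} - \frac{k}{2}$, the parameters of $M_{\nu, \mu}$ and $W_{\kappa, \mu}$ appearing in $\mathcal{M}_{s, k - \frac{1}{2}}(-y)$ and $\mathcal{W}_{s, k - \frac{1}{2}}(-y)$ (for $y > 0$) satisfy $\mu - \nu + \frac{1}{2} = 1$ and $\mu - \kappa + \frac{1}{2} = 1$, respectively. Combining the factorizations
\[
M_{\nu, \mu}(y) = y^{\mu + \frac{1}{2}} e^{-y/2}\, {}_{1}F_{1}\!\left(\mu - \nu + \tfrac{1}{2};\, 2\mu + 1;\, y\right), \qquad W_{\kappa, \mu}(y) = y^{\mu + \frac{1}{2}} e^{-y/2}\, U\!\left(\mu - \kappa + \tfrac{1}{2},\, 2\mu + 1,\, y\right)
\]
with the standard identities ${}_{1}F_{1}(1; b; y) = (b - 1)\, y^{1 - b}\, e^{y}\, \gamma(b - 1, y)$ and $U(1, b, y) = e^{y}\, y^{1 - b}\, \Gamma(b - 1, y)$ taken at $b = \frac{5}{2} - k$ yields, for $y > 0$,
\[
\mathcal{M}_{s, k - \frac{1}{2}}(-y)\, e^{-y/2} = \left(\tfrac{3}{2} - k\right) \gamma\!\left(\tfrac{3}{2} - k,\, y\right), \qquad \mathcal{W}_{s, k - \frac{1}{2}}(-y)\, e^{-y/2} = \Gamma\!\left(\tfrac{3}{2} - k,\, y\right).
\]
For $\mathcal{W}_{s, k - \frac{1}{2}}$ at positive argument (i.e.\ case (3) of Theorem \ref{sPoincareThm}, where $D' < 0$), I would apply the symmetry $W_{\kappa, \mu} = W_{\kappa, -\mu}$ to reduce to $\mu - \kappa + \frac{1}{2} = 0$, whence $U(0, b, y) = 1$ gives $\mathcal{W}_{s, k - \frac{1}{2}}(y) = e^{-y/2}$; the $e^{-\pi D' v /(2m)}$ prefactor then cancels, rendering the $D' < 0$ contribution purely holomorphic in $\tau$ as the Corollary requires.

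Substituting these reductions into Theorem \ref{sPoincareThm}, using $\Gamma(2s) = (\frac{3}{2} - k) \Gamma(\frac{3}{2} - k)$ together with $\gamma(a, y) = \Gamma(a) - \Gamma(a, y)$, produces the claimed expansion of Corollary \ref{4.2}. The principal term becomes $2q^{n}\bigl(1 - \Gamma(\tfrac{3}{2} - k,\, \pi D v / m)/\Gamma(\tfrac{3}{2} - k)\bigr)\, \vartheta^{(r)}_{k, m}$, whose only exponentially growing piece as $v \to \infty$ is $2 q^{n}\, \vartheta^{(r)}_{k, m}$, identifying the holomorphic principal part at infinity as stated. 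Finally, to establish membership in the subspace $\widehat{\mathbb{J}}^{\mathrm{cusp}, \infty}_{\rho_{N|h}, k, m}(N)$, I would note that only the identity coset in $\Gamma^{J}_{\infty} \backslash \Gamma^{J}_{0}(N)$ contributes an exponentially growing term to the Poincar\'e series, and this contribution is supported at the infinite cusp, so the principal parts at all other cusps vanish. The main obstacle is not analytic---the convergence and Casimir eigenvalue conditions are free from Theorem \ref{sPoincareThm}---but algebraic: the careful tracking of the $\mathrm{sgn}(D')$-dependent $\Gamma$-factors, the powers of $y$ in the Whittaker factorizations, and the use of $W_{\kappa, \mu} = W_{\kappa, -\mu}$ to handle the $D' < 0$ case uniformly with the other cases.
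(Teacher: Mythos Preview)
Your proposal is correct and follows essentially the same approach as the paper: specialize Theorem~\ref{sPoincareThm} at $s=\tfrac{5}{4}-\tfrac{k}{2}$, reduce the Whittaker factors to incomplete Gamma functions (the paper simply states the three special values $\mathcal{W}_{s,k-\frac12}(y)=e^{-y/2}$, $\mathcal{W}_{s,k-\frac12}(-y)=e^{y/2}\Gamma(\tfrac32-k,y)$, and $\mathcal{M}_{s,k-\frac12}(-y)=(k-\tfrac32)e^{y/2}\Gamma(\tfrac32-k,y)+e^{y/2}\Gamma(\tfrac52-k)$, which are equivalent to what you derived via the Kummer/Tricomi representations), and then read off the Fourier expansion and principal parts. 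Your justification for the vanishing of the principal parts at non-infinite cusps is also the same as the paper's ``directly taking the limit,'' just phrased more explicitly.
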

\begin{proof}
The proof follows directly from Theorem \ref{sPoincareThm}, using the following special values of the Whittaker functions
 \begin{eqnarray*}
\mathcal{W}_{\frac54-\frac{k}{2},k-\frac12}(y)&=& e^{-\frac{y}{2}},\\
\mathcal{W}_{\frac54-\frac{k}{2},k-\frac12}(-y)&=& e^{\frac{y}{2}} \Gamma \left(\frac32-k,y \right),\\ 
\mathcal{M}_{\frac54-\frac{k}{2},k-\frac12}(-y)&=&\left( k-\frac32\right)e^{\frac{y}{2}} \Gamma \left(\frac32-k,y \right) 
+ e^{\frac{y}{2}} \Gamma \left(\frac52-k \right).
\end{eqnarray*}

The claim that $P_{k,m,N|h, \frac54 - k}^{(n,r)}$ has no principal part at cusps inequivalent to $\infty$ follows by directly taking the limit.
\end{proof}

Our next goal is to show that the Poincar\'e series defined for $s$ with Re$(s)>\frac54$ by (\ref{Poincare-def}) can be  analytically continued by using the Fourier expansions of Corollary \ref{4.2}. The case of most interest to us is $k=1$, for which we require $s=\tfrac54-\tfrac k2=\tfrac34$. 

According to the standard asymptotic formulas for Bessel functions, we may replace $ Y_{\frac12} (\frac{\pi\sqrt{|DD'|}}{2c})$ with
$
\frac{|DD'|^{\frac14}}{c^{\frac12}}
$
for $c$ large, with $Y=I$ or $Y=J$.  
Thus the convergence of the Fourier coefficient $b_{n,r}^{(k)}(n',r')$ of Corollary \ref{4.2} is controlled by that of
\begin{gather}\label{eqn:jacobikloostermansumtobound}
\sum_{c>0}c^{-\frac32}K_c(n,r,n',r'; N|h)\frac{|DD'|^{\frac14}}{c^{\frac12}}.
\end{gather}
Easy estimates show that $K_c(n,r,n',r';N|h)=O(c^{3/2})$ as $c\to \infty$, where the implied constant is independent of $D$ and $D'$. So the sum (\ref{eqn:jacobikloostermansumtobound}) is $O(c^{1-2s})$ and thus converges absolutely for $\operatorname{Re}(s)>1$. In this way Corollary \ref{4.2} leads to a definition of $P_{1,2,N|h,s}^{(0,1)}$ for $\operatorname{Re}(s)>1$. To treat $s=\tfrac34$ requires a more subtle analysis, which we now present.

\begin{theorem}\label{AnaC} The function $P_{1,2,N|h,s}^{(0,1)}$ has an analytic continuation to $s=\frac 34$ given by its Fourier expansion. It is an element in $\widehat{\mathbb{J}}_{\rho_{N|h}, 1, 2}^{\text{cusp}, \infty} (N)$  and its principal part in $\infty$ equals $2 \vartheta^{(1)}_{1,2}$.
\end{theorem}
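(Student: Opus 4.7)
The plan is to take the Fourier expansion in Theorem \ref{sPoincareThm}, specialize to $(k,m,n,r)=(1,2,0,1)$ so that $D=1$, and extend each Fourier coefficient term by term from $\operatorname{Re}(s)>5/4$ down to the target point $s=3/4$. For a fixed Fourier index $(n',r')$, $b_{v,s}(n',r')$ is, up to factors holomorphic and nonvanishing at $s=3/4$, a Bessel--Kloosterman series
\[
\sum_{\substack{c>0\\ N\mid c}} c^{-3/2}\,K_c(0,1,n',r';N|h)\,J_{2s-1}\!\left(\tfrac{\pi\sqrt{|DD'|}}{mc}\right)
\]
(with the analogous $I_{2s-1}$ or $D'=0$ version when appropriate). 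Using the small-argument asymptotics $J_{2s-1}(x),I_{2s-1}(x)\sim x^{2s-1}/(2^{2s-1}\Gamma(2s))$, convergence is governed by $\sum_{N\mid c}c^{-1/2-2s}K_c$, which with the trivial bound $K_c=O(c^{3/2})$ is only absolutely convergent for $\operatorname{Re}(s)>1$. Reaching $s=3/4$ therefore requires a substantially finer estimate of $K_c$.

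To obtain one, I would apply the explicit Gauss sum evaluation of Proposition \ref{GaussSumComputations}. The inner sum on $\lambda$ in $K_c(0,1,n',r';N|h)$ is the quadratic Gauss sum $G(2\overline{d},-r'+\overline{d},c)$, which \eqref{foofacts} expresses as $\sqrt{c'}\,\varepsilon_{c'}\beta_{c,d,r'}\left(\tfrac{2^{\nu+1}d}{c'}\right)e_{c'}\!\bigl(-\overline{2}^{\nu+3}d(\overline{d}-r')^2\bigr)$. Because $d(\overline{d}-r')^2\equiv \overline{d}-2r'+r'^2 d\pmod{c'}$, the remaining sum over $d\pmod{c}^{*}$ becomes, up to a root of unity, a Sali\'e-type sum
\[
\sum_{d\pmod{c}^{*}} \beta_{c,d,r'}\left(\tfrac{2^{\nu+1}d}{c'}\right) e^{-2\pi i cd/(Nh)}\, e_{c'}\!\bigl(A\,\overline{d}+B\, d\bigr)
\]
for integers $A,B$ depending only on $n',r'$. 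Weil's bound applied to this twisted Kloosterman sum then yields $|K_c|\ll c^{1+\varepsilon}$, which suffices for absolute convergence of the Fourier series on $\operatorname{Re}(s)>3/4$.

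The main obstacle is to reach the boundary value $s=3/4$ itself, where Weil's bound exactly balances the size of the summand and absolute convergence fails by $\varepsilon$. To recover this last bit I would exploit the explicit Sali\'e evaluation, which rewrites $K_c$ as a bounded sum of roots of unity indexed by solutions of a quadratic congruence modulo $c'$, producing an oscillatory expression in $c$. Summation by parts against the weight $c^{-2}$ (the value of $c^{-1/2-2s}$ at $s=3/4$), together with the congruence constraint $N\mid c$ and the Bessel correction, should extract the additional cancellation needed for conditional convergence of the Fourier expansion at $s=3/4$. This is precisely the ``subtle analysis'' advertised by the authors and is where the real technical difficulty of the proof lies.

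Once convergence at $s=3/4$ is secured, the remaining assertions follow. Invariance under $\Gamma_0^J(N)$ with multiplier $\rho_{N|h}$ and annihilation by the Casimir operator $C^{1,2}$ pass from $\operatorname{Re}(s)>5/4$ to $s=3/4$ by uniformity of the Fourier expansion on compacta. The leading theta term in Theorem \ref{sPoincareThm} specializes at $(k,m,n,r)=(1,2,0,1)$ and $s=3/4$ via the Whittaker identity $\mathcal{M}_{5/4-k/2,k-1/2}(-y)=(k-\tfrac{3}{2})e^{y/2}\Gamma(\tfrac{3}{2}-k,y)+e^{y/2}\Gamma(\tfrac{5}{2}-k)$ recorded in Corollary \ref{4.2} to the sum of $2\vartheta_{1,2}^{(1)}(\tau;z)$ and a non-holomorphic piece proportional to $\Gamma(\tfrac12,\tfrac{\pi v}{2})\vartheta_{1,2}^{(1)}(\tau;z)$. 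The $D'>0$ Fourier contributions likewise carry incomplete gamma factors $\Gamma(\tfrac{1}{2},\pi D'v/m)$, so only the holomorphic piece $2\vartheta_{1,2}^{(1)}$ survives into the principal part at $\infty$. Vanishing of principal parts at non-infinite cusps follows exactly as in the last sentence of Corollary \ref{4.2}. Altogether, this places the analytic continuation in $\widehat{\mathbb{J}}^{\text{cusp},\infty}_{\rho_{N|h},1,2}(N)$ with principal part $2\vartheta_{1,2}^{(1)}$ at $\infty$.
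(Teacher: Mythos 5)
Your setup is right and you have correctly located the difficulty: with the Weil-type bound $K_c\ll c^{1+\varepsilon}$ the series $\sum_{N\mid c}c^{-1/2-2s}K_c$ converges absolutely only for $\operatorname{Re}(s)>\tfrac34$, and the whole content of the theorem is the boundary point $s=\tfrac34$. But your proposed resolution there --- ``summation by parts against the weight $c^{-2}$ \ldots should extract the additional cancellation needed for conditional convergence'' --- is not a proof and is the step that would fail. Cancellation in the $c$-aspect of a Kloosterman-sum zeta function at the edge of the Weil range is exactly the hard analytic input of Goldfeld--Sarnak/Selberg type; it cannot be obtained from the oscillation of the individual Sali\'e evaluations by elementary partial summation, and the paper explicitly remarks that it deliberately avoids having to develop such spectral estimates for Jacobi forms.

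What the paper does instead, and what your argument misses, is structural: since $k=1$, the Fourier coefficient is the \emph{difference} $c_{v,s}(n',r')=b_{v,s}(n',r')-b_{v,s}(n',-r')$, so one never needs to control $K_c(0,1,n',r';N|h)$ alone, only the antisymmetrized combination $K_c(0,1,n',r';N|h)-K_c(0,1,n',-r';N|h)$. Writing the inner $\lambda$-sum as the Gauss sum $G(2\overline d,-r'+\overline d,c)$ (as you do), the paper proves the identity $f(c,a,r)-f(c,a,-r)=-2g_0(c,a,r)$, where $g_0$ is built from $e^{-3\pi i s(a,c)}$ with $s(a,c)$ the Dedekind sum --- verified by checking the value at $0$, translation invariance, and an inversion property coming from Dedekind reciprocity matched against Gauss-sum reciprocity. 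This collapses the Jacobi--Kloosterman difference to $2\sqrt{c}$ times the classical Kloosterman sum $S\bigl(\tfrac{|D'|+1}{8},c,\varepsilon^{-3}\rho_{N|h}\bigr)$ for the $\eta^{-3}$ multiplier system, and convergence of $\sum_c c^{-3/2}S(\cdots)$ is then imported wholesale from Theorem 9.1 of Cheng--Duncan, where it was established by spectral theory for the mock modular forms $H_g$. Without either this reduction or an independent spectral argument, your proof does not close at $s=\tfrac34$. Your final paragraph on the principal parts and the vanishing at non-infinite cusps is consistent with Corollary \ref{4.2} and is fine once convergence is in hand.
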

\begin{remark}
It may be possible to prove the convergence of (\ref{eqn:jacobikloostermansumtobound}) using an adaptation of the spectral theoretic methods of \cite{Roe_EigPrbAutFrmsHypPln,Sel_EstFouCoeffs} (see \cite{Sar_ANT} for a review) to Jacobi forms, together with generalizations of the results of \cite{Goldfeld-Sarnak}. Here, we choose to use elementary methods to rewrite (\ref{eqn:jacobikloostermansumtobound}) in terms of well-known multiplier systems. Our technique is of general interest and should be useful in the case that $m>1$ as well. \end{remark}
\begin{proof}[Proof of Theorem \ref{AnaC}]
The only difficulty  is to show convergence of the Fourier expansion. 
By Theorem \ref{sPoincareThm}, we need to show convergence of 
\begin{equation*}
\sum\limits_{c>0\atop{N|c}}c^{-\frac32} \Big(K_c(0, 1, n', r'; N|h)-K_c(0, 1, n', -r'; N|h)\Big) Y_{\frac12} \left(\frac{\pi\sqrt{|DD'|}}{2c}\right).
\end{equation*}
As above, we may replace $Y_{\frac12} (\frac{\pi\sqrt{|DD'|}}{2c})$ by
$
\frac{|DD'|^{\frac14}}{c^{\frac12}},
$
and aim to bound
\begin{equation}\label{Kloosterbound}
\sum\limits_{c>0\atop{N|c}}\frac1{c^2} \Big(K_c(0, 1, n', r'; N|h)-K_c(0, 1, n', -r'; N|h)\Big).
\end{equation}

We next write the Kloosterman sums in terms of the generalized Gauss sums described in Section \ref{subsectiongauss}, since we can manipulate and compute them directly. 
For this, we write \begin{gather*}
	K_c(0,1,n',r';N|h)=\sqrt{c}\sum_{d\pmod{c}^*}e\left(-\frac{cd}{Nh}\right)e_c(n'd)f(c,d,r')
\end{gather*}
with
 \[f(c,a,r):=\frac{e_{4c}(-r)}{\sqrt c}G(2a,a-r,c).\]

\noindent Now  define \[g_0(c,a,r):=i\left(\frac{-4}r\right)e_{8c}\left(d\left(1-r^2\right)\right)e^{-3\pi is(a,c)},\]
where 
\[s(a,c):=\sum_{n\pmod c}\left(\left(\frac nc\right)\right)\left(\left(\frac{na}c\right)\right)\]
is the usual Dedekind sum and 
\[\left(\left(x\right)\right):=\begin{cases}x-\lfloor x\rfloor-\frac12&\text{ if }x\in\R\backslash\Z,\\ 0&\text{ if }x\in\Z.\end{cases}\]
Assume for now that for $(a,c)=1$ we have the following identity:

\begin{equation}\label{toshow}F(c,a,r):=f(c,a,r)-f(c,a,-r)=-2g_0(c,a,r).\end{equation}
Armed with (\ref{toshow}), the convergence of (\ref{Kloosterbound}) follows from the convergence of the coefficients of the mock modular form $H_g$, established via spectral theory in \cite{Cheng2011}. To see this, assume henceforth that $D'=r'^2-8n'<0$ and define
\begin{gather*}
S(k,c,\varepsilon^{-3}\rho_{N|h}):=\sum_{d\pmod{c}^*}\omega^{-3}_{d,c}e\left(\frac{-cd}{Nh}\right)e\left(\frac{kd}{c}\right),
\end{gather*}
where $\omega_{d,c}:=e^{\pi i s(d,c)}$ and $\varepsilon$ is the multiplier system of $\eta$ \cite{RademacherLogEta}.
Note that 
\[\varepsilon\left(\begin{pmatrix}a&b\\ c&d\end{pmatrix}\right)=\omega_{d,c}e^{\pi i\left(\frac{a+d}{12c}-\frac14\right)},\] 
and $S(k,c,\varepsilon^{-3}\rho_{N|h})$ coincides with the Kloosterman sum denoted $S(0,k,c,\varepsilon^{-3}\rho_{N|h})$ in \cite{Cheng2011}.
Then we have that
\begin{gather*}
\left(\frac{-4}{r'}\right)e\left(-\frac{3}{4}\right)
S\left(\frac{|D'|+1}{8},c,\varepsilon^{-3}\rho_{N|h}\right)
=\sum_{d\pmod{c}^*}e\left(-\frac{cd}{Nh}\right)e\left(\frac{n'd}{c}\right)g_0(c,d,r'),
\end{gather*}
and (\ref{toshow}) implies that
\begin{gather*}
	\frac{1}{\sqrt{c}}\Big(K_c(0,1,n',r';N|h)-K_c(0,1,n',-r';N|h)\Big)
	=
	2\left(\frac{-4}{r'}\right)e\left(\frac{3}{4}\right)
S\left(\frac{|D'|+1}{8},c,\varepsilon^{-3}\rho_{N|h}\right).
\end{gather*}
Thus we have reduced the Jacobi-Maass Kloosterman sums to the (more) classical Kloosterman sums arising from the mock modular forms $H_g$. The convergence of (\ref{Kloosterbound}) then follows directly from Theorem 9.1 of \cite{Cheng2011}.

We now turn to the proof of (\ref{toshow}). Note that we may assume $r$ to be odd, as if $r$ is even, the definition of $g_0(c,a,r)$ and the explicit formulas in Proposition \ref{GaussSumComputations} imply that both sides of (\ref{toshow}) vanish. 
Note that the pairs $(a,c)$ with $(a,c)=1$ are in correspondence with fractions $\frac ac\in\Q$. Thus, it is enough to establish (\ref{toshow}) for $\frac ac=0$ and then to show that both $F$ and $g_0$ transform in the same way under the action of $\operatorname{SL}_2(\Z)$ on $\Q$, as this action is transitive. 
Specifically, it suffices to establish the following:

\begin{enumerate}
\item (Equality at $0$): We have that \begin{equation}\label{stepone}F(1,0,r)=-2g_0(1,0,r).\end{equation} 
\item (Translation property): We have that \begin{equation}\label{steptwo}F(c,a,r)-F(c,a+c,r)= g_0(c,a,r)-g_0(c,a+c,r)=0.\end{equation}
\item (Inversion property): We have that \begin{equation}\label{stepthree}\frac{F(c,a,r)}{F(a,-c,r)}=\frac{g_0(c,a,r)}{g_0(a,-c,r)}.\end{equation}

\end{enumerate}

To show (\ref{stepone}), note that $g_0(1,0,r)=i\left(\frac{-4}r\right)$ as $s(0,1)=0.$ Moreover, \[f(0,1,r)-f(0,1,-r)=e_4(-r)-e_4(r)=-2i\left(\frac{-4}r\right),\] as $r$ is odd. For (\ref{steptwo}), observe that we have $s(a+c,c)=s(a,c)$, as the inverse of $a+c$ modulo $c$ is equal to the inverse of $a$ modulo $c$, and also $G(2a+2c,a+c-r,c)=G(2a,a-r,c)$, which can be seen directly from the definition. 

The last property, (\ref{stepthree}), requires more careful consideration. We may compute the inversion property of $g_0(c,a,r)$ directly, using the classical reciprocity formula for Dedekind sums. Specifically, recall that $s(-a,c)=-s(a,c)$ and for $a,c>0$, both positive, we have the following reciprocity formula, stated in (69.6) of \cite{MR0364103}: 
\begin{equation}\label{DedekindReciprocity}s(a,c)-s(-c,a)=-\frac14+\frac1{12}\left(\frac ca+\frac ac+\frac1{ac}\right).\end{equation}
A short calculation translates the standard Dedekind reciprocity formula (\ref{DedekindReciprocity}) into the transformation equation
\begin{equation}\label{ginversiontrans}g_0(c,a,r)=g_0(a,-c,r)\zeta_8^3e_{8ac}\left(-a^2-c^2-r^2\right),\end{equation}
where $\zeta_a:=e^{2\pi i/a}$. Thus we need to compute the inversion formula for $F(c,a,r)$ for comparison with (\ref{ginversiontrans}). The key is to use an inversion formula for generalized Gauss sums, and then to employ Proposition \ref{GaussSumComputations} on a case-by-case basis.  Since the proof is very similar in each case, we only provide the proof when $2$ exactly divides $c$, 
so we assume this condition on $c$ from now on.
In this case, an elementary calculation, using the exact formula in Proposition \ref{GaussSumComputations}, shows that

\begin{equation}\label{fdiff}F(c,a,r)=(1+\delta_{a,c} i)f(c,a,r)\end{equation}
where \[\delta_{a,c}:=\begin{cases}1&\text{ if } \frac{ac}2\equiv3\pmod4,\\ -1&\text{ if }\frac{ac}4\equiv1\pmod4.\end{cases}\] 

We now give the inversion property of $G(a,b,c)$. From Theorem 1.2.2 of \cite{MR1625181}, we find that for $a,c>0$:
\begin{equation}\label{GaussSumInversion}G(a,b,c)=\sqrt{\frac{c}{2a}}\zeta_8e_{4ac}(-b^2)G\left(-\frac c2,-b,2a\right).\end{equation}
Another elementary calculation gives \begin{equation*}\begin{split}G(2a,a-r,c)=G(2a,-r,c)e_{8c}\left(-\bar2_c^38(a-2r)\right),\end{split}\end{equation*}
where $\bar2_c$ denotes any multiplicative inverse of $2$ modulo $c$.
Using (\ref{fdiff}),(\ref{GaussSumInversion}), and (\ref{foofacts}), we see that
 \begin{equation}\label{bleh1}\begin{split}F(c,a,r)=\frac{e_{4c}(-r)}{2\sqrt a}\zeta_8e_{8ac}\left(-(a-r)^2\right) G\left(-2c,r,a\right)G\left(-\frac {ac}2,r-a,4\right)(1+\delta_{a,c} i)\\= 2\delta_{a,c} \frac{e_{4c}(-r)}{\sqrt a}\zeta_8^3e_{8ac}\left(-(a-r)^2\right)G(-2c,r,a).\end{split}\end{equation}
Using (\ref{fdiff}) to compare the factor $G(-2c,r+c,a)$ to $G(-2c,r,a)$ in $f(a,-c,r)$ gives, after a short calculation,
\begin{equation}\label{bleh2}G(-2c,r+c,a)=\delta_{a,c} G(-2c,r,a)e_{8a}(2r+c).\end{equation}
Thus, (\ref{bleh1}) and (\ref{bleh2}) directly yield
\[f(c,a,r)=f(a,-c,r)\zeta_8^3e_{8ac}\left(-a^2-c^2-r^2\right),\]
as desired.
\end{proof}

\section{Decomposition into Poincar\'e series}\label{sec:decps} 
To show how to decompose the spaces of interest for this paper in terms of Maass-Jacobi Poincar\'e series, we require a certain pairing generalizing the pairings in \cite{MR2805582,BruFun_TwoGmtThtLfts}. Recall from \cite{MR1074485} that
if $\phi, \psi \in J_{\sigma, k,m}^{\text{\upshape sk, cusp}}(N)$, 
then the Petersson scalar product of
$\phi$ and $\psi$ is defined by
\begin{equation*}
\label{skew-product}
\left\langle\phi,\psi \right\rangle:=\frac1{\left[\Gamma_0^J(N):\Gamma^J\right]}
\int_{\Gamma^J (N)\backslash  \HH \times \C
}
\phi(\tau;z)\overline{\psi(\tau;z)}e^{-\frac{4 \pi my^2}{v}} v^kdV,
\end{equation*}
where $dV:= \frac{du dv dx dy}{v^3}$ is the $\Gamma^J$-invariant volume element on $\mathbb{H} \times \C.$
If $\phi\in \widehat{\mathbb{J}}_{\sigma, k,m}^{\text{\upshape cusp}, \infty}(N)$ and $\psi\in J_{\sigma, 3-k,m}^{\text{ \upshape sk, cusp}}(N)$, then we define the pairing
\begin{equation}\label{pairing}
\left\{\phi,\psi \right\}:=\left\langle\xi_{k,m}(\phi),\psi\right\rangle.
\end{equation}
This pairing is determined by the principal part of $\phi$. To be more precise, as in the level one case \cite{MR2805582}, one can show the following. 
\begin{lemma}\label{lem:decps:pairprin}
If $\phi \in \widehat{\mathbb{J}}^{\text{\upshape cusp}, \infty}_{\sigma, k, m} (N)$ and $\psi\in J_{\sigma, 3-k, m}^{\text{\upshape sk, cusp}}(N)$, then we have
\begin{gather*}
\{\phi, \psi\}=\sum_{\substack{D>0 \\ r\pmod{2m}}} c_{\phi}^+(n, r) \overline{c_{\psi}(-n, r)}.
\end{gather*}
\end{lemma}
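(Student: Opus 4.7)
The plan is to adapt the proof of the level one case in \cite{MR2805582} by Bringmann and Richter, which is itself modeled on the Bruinier--Funke pairing \cite{BruFun_TwoGmtThtLfts}. Since $\xi_{k,m}(\phi)$ maps into $J^{sk!}_{\sigma,3-k,m}(N)$ and is in fact a cusp form (by the hypothesis $\phi \in \widehat{\mathbb{J}}^{\text{\upshape cusp},\infty}_{\sigma,k,m}(N)$ together with the vanishing of the principal parts at every cusp), the Petersson product $\langle \xi_{k,m}(\phi),\psi\rangle$ is an absolutely convergent integral over a fundamental domain for $\Gamma_0^J(N)$ acting on $\HH\times\C$. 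The task is to compute this integral in terms of the Fourier coefficients at $\infty$ of $\phi^{+}$ and of $\psi$.

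First I would truncate the fundamental domain at height $v=T$, denoting the resulting region $\mathcal{F}(T)$. Using the explicit formula for $D^{(m)}_-$ in Section~2.3, together with the factor $\left(\tfrac{\tau-\overline{\tau}}{2i}\right)^{k-5/2}$ in $\xi_{k,m}$ and the Petersson weight $e^{-4\pi m y^2/v}v^k$, one checks by a direct computation of $\bar\tau$- and $\bar z$-derivatives that the integrand
\[
\xi_{k,m}(\phi)(\tau;z)\,\overline{\psi(\tau;z)}\,e^{-4\pi m y^2/v}\,v^{k}\,dV
\]
is of the form $d\omega$ for the explicit $2$-form
\[
\omega \;=\; \phi(\tau;z)\,\overline{\psi(\tau;z)}\,e^{-4\pi m y^{2}/v}\,v^{k-2}\cdot \bigl(\text{a $2$-form in } du,dx,dy,dv\bigr).
\]
The key point is that being skew-holomorphic, $\psi$ satisfies a first-order $\overline{\partial}$-type equation in $\tau$ after multiplication by $e^{-4\pi m y^2/v}$, which combines exactly with the $\overline{\partial}$ content of $D^{(m)}_-(\phi)$ to give an exact form. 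This is the only place where the skew-holomorphic condition on $\psi$ is essential.

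Next I would apply Stokes' theorem. The boundary pieces of $\mathcal{F}(T)$ identified by $\Gamma_0^J(N)$ cancel in pairs due to the $|_{\sigma,k,m}$-invariance of both $\phi$ and $\psi$ and the agreement of their multipliers. The contributions at non-infinite cusps vanish in the limit because $\mathbb{P}_{\phi,\alpha}=0$ for every cusp $\alpha\neq\infty$ and $\psi$ is a cusp form, so the product decays exponentially there. Thus only the horizontal slice at $v=T$ survives. In this boundary integral I would substitute the full Fourier expansion of $\phi$ at $\infty$ (with its $v^{3/2-k}$, holomorphic, and non-holomorphic pieces) and of $\psi$ as in \eqref{eqn:jac:skwjacfoucsp}. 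Orthogonality of $e^{2\pi i(nu+rx)}$ over the strip $0\le u,x\le 1$ collapses the expansion to a sum over pairs $(n,r)$ with matching indices, where the $r$-variable is determined modulo $2m$ by the elliptic translations inside $\Gamma^J$, which accounts for the $r\pmod{2m}$ indexing in the final formula.

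Finally, the $y$-integration along $v=T$, combined with the Gaussian $e^{-4\pi m y^2/v}$ and the factor $e^{-\pi v(r^2-4mn)/m}$ from $\psi$ and from the non-holomorphic part of $\phi$, produces: (a) terms from $\phi^-$ that decay as $T\to\infty$ because the relevant incomplete Gamma values vanish in the limit, (b) terms from the $v^{3/2-k}$ contribution to $\phi$ that also decay, and (c) terms from $\phi^{+}$ that contribute only when the combined exponential is trivial, namely exactly when the index pair on the $\psi$-side is $(-n,r)$ with $D=r^2-4mn>0$. Summing these contributions yields
\[
\{\phi,\psi\} \;=\; \sum_{\substack{D>0 \\ r\pmod{2m}}} c_{\phi}^{+}(n,r)\,\overline{c_{\psi}(-n,r)},
\]
as claimed. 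The main obstacle is the explicit verification of the $d\omega$ identity in the first step and the careful matching of exponential factors in the $y$-integration at $v=T$; once these are established, the remainder of the argument is a direct extension of the level one case, with the only bookkeeping required being the multiplier $\rho_{N|h}$, which passes through trivially since it is constant on the boundary identifications.
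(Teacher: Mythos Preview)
Your proposal is correct and follows exactly the approach the paper has in mind: the paper's own proof simply states that the level one argument in \cite{MR2805582} goes through unchanged for any multiplier system $\sigma$ with $|\sigma|=1$, and what you have written is a faithful outline of that Bruinier--Funke/Bringmann--Richter Stokes' theorem computation. One small slip: since the integration is over the $4$-real-dimensional region $\Gamma_0^J(N)\backslash(\HH\times\C)$, the form $\omega$ in your Stokes step should be a $3$-form rather than a $2$-form.
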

\begin{proof}
The proof in \cite{MR2805582} follows mutatis mutandis for any multiplier system $\sigma$ of absolute value one. 
\end{proof}
We will now show that Maass-Jacobi forms of weight 1 and index 2 are determined by their principal parts for certain levels $N$, which will be sufficient for our proof of Theorem 1.1.
\begin{proposition}\label{thm:uniq}
If $\phi_1, \phi_2 \in  \widehat{\mathbb{J}}^{\text{\upshape cusp}, \infty}_{\sigma, 1, 2} (N)$ with $\mathbb{P}_{\phi_1} = \mathbb{P}_{\phi_2}$ and $N$ the level of any of the modular forms in Table \ref{table_chiT}, then $\phi_1 = \phi_2$.
\end{proposition}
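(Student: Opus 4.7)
The plan is to apply the pairing machinery of Lemma \ref{lem:decps:pairprin} to reduce to showing that the relevant spaces of holomorphic Jacobi forms vanish. Set $\phi := \phi_1 - \phi_2$, which lies in $\widehat{\mathbb{J}}^{\text{\upshape cusp}, \infty}_{\sigma, 1, 2}(N)$ and has vanishing principal part at the infinite cusp (by hypothesis) and at every other cusp (since the space already requires this). The goal is to show $\phi \equiv 0$.

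First I would use the pairing $\{\,\cdot\,,\,\cdot\,\}$ defined in \eqref{pairing}. Since $\phi\in\widehat{\mathbb{J}}^{\text{\upshape cusp}}_{\sigma,1,2}(N)$, we have $\xi_{1,2}(\phi)\in J^{\text{\upshape sk, cusp}}_{\sigma, 2, 2}(N)$, so we may test the pairing against $\psi := \xi_{1,2}(\phi)$ itself. By Lemma \ref{lem:decps:pairprin} the pairing depends only on the holomorphic Fourier coefficients $c_\phi^+(n,r)$ with $D=r^2-8n>0$; these coefficients vanish because $\mathbb{P}_\phi = 0$. Hence
\[
0 = \{\phi,\xi_{1,2}(\phi)\} = \langle\xi_{1,2}(\phi),\xi_{1,2}(\phi)\rangle,
\]
and positive-definiteness of the Petersson inner product on skew-holomorphic Jacobi cusp forms forces $\xi_{1,2}(\phi) = 0$. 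Because the kernel of $\xi_{1,2}$ on $\widehat{\mathbb{J}}_{\sigma,1,2}(N)$ is $J^!_{\sigma,1,2}(N)$, we conclude that $\phi$ is a weakly holomorphic Jacobi form of weight $1$ and index $2$; combined with the fact that its principal part vanishes at every cusp, this upgrades $\phi$ to an element of the space of holomorphic Jacobi forms $J_{\sigma,1,2}(N)$.

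The final, and main, step is to show $J_{\sigma,1,2}(N) = 0$ for each level $N$ and multiplier $\sigma=\rho_{N|h}$ arising from the conjugacy classes of $M_{24}$ listed in Table \ref{table_chiT}. I would do this via the theta decomposition: any $\phi\in J_{\sigma,1,2}(N)$ can be written as
\[
\phi(\tau;z) = \sum_{\mu\pmod 4} h_\mu(\tau)\,\vartheta_{2,\mu}(\tau;z),
\]
with $h_\mu$ a holomorphic modular form of weight $1/2$ on a congruence subgroup of $\SL_2(\Z)$ with multiplier determined by $\rho_{N|h}$ and the theta transformation law. Because $\phi$ is of odd weight it transforms oddly in $z$, forcing $h_0=h_2=0$ and $h_3=-h_1$, so $\phi$ is determined by a single weight $1/2$ holomorphic modular form $h_1$.

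The hard part will be verifying that for each of the relevant pairs $(N,h)$, no such $h_1$ exists. My approach would be to apply the Serre--Stark basis theorem: every weight $1/2$ holomorphic modular form is a linear combination of unary theta series $\theta_{\psi,t}(\tau) = \sum_n \psi(n) q^{t n^2}$, and then check that the specific multiplier system induced by $\rho_{N|h}$ together with the theta multiplier on $\vartheta_{2,1}-\vartheta_{2,3}$ is incompatible with any such $\theta_{\psi,t}$ at the levels listed in Table \ref{table_chiT}. This should be a direct, if tedious, finite case check using the explicit formula \eqref{eqn:jac:rhoNh} for $\rho_{N|h}$ and the standard formulas for the multiplier of $\vartheta_{2,\mu}$. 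Once each $h_1$ is forced to be zero, we obtain $\phi=0$, i.e.\ $\phi_1=\phi_2$, as required.
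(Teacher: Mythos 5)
Your proposal is correct and follows essentially the same route as the paper: use the pairing of Lemma \ref{lem:decps:pairprin} to show $\phi:=\phi_1-\phi_2$ lands in $J_{\sigma,1,2}(N)$, then kill that space via the theta decomposition of an odd-weight index-two Jacobi form and a finite check on the resulting weight $1/2$ component. Two execution details differ, both harmless. First, you pair $\phi$ against $\psi=\xi_{1,2}(\phi)$ and invoke positive-definiteness of the Petersson norm; the paper instead cites the non-degeneracy of the induced pairing on $\mathbb{J}_{1,2}(N)/J_{1,2}(N)\times J^{sk,\mathrm{cusp}}_{2,2}(N)$ from \cite{MR2680205} --- your version is the standard proof of that non-degeneracy, so it is if anything more self-contained. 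Second, for the final vanishing the paper writes $\phi=h(\tau)q^{1/8}\vartheta^{(1)}_{1,2}(\tau;z)$, specializes $\phi'(\tau;0)=2h\eta^3\in M_2(N)$, passes to $h(8\tau)\in M_{1/2}(\operatorname{lcm}(64,N))$ supported on exponents $\equiv 7\pmod 8$, and verifies emptiness by computer; you propose to hit $h_1$ directly with Serre--Stark. That works, but note $h_1$ carries an eighth-root-of-unity multiplier (exponents in $\tfrac78+\Z$), so you must first rescale (e.g.\ $\tau\mapsto 8\tau$, exactly the paper's move) to land in a space $M_{1/2}(\Gamma_0(4M),\chi)$ where Serre--Stark applies; after that your congruence-plus-conductor check on the unary theta series $\theta_{\psi,t}$ is the same finite verification the paper delegates to MAGMA.
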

\begin{proof}
For the proof it is enough to assume that $\mathbb{P}_\phi = 0$ and conclude that then $\phi=0$. From Lemma \ref{lem:decps:pairprin}, we obtain that for $\psi\in  {J}^{\text{\upshape sk, cusp}}_{\sigma,3-1, 2} (N)$ one has 
$$
0= \left\{ \phi, \psi \right\} = \left\langle \xi_{1,2} (\phi), \psi \right\rangle.
$$
We next claim that $\langle \cdot , \cdot \rangle$ is nondegenerate for the levels occurring in Table \ref{table_chiT}.
For the reader's convenience, we recall that the level of a form in this table corresponding to an element $g\in M_{24}$ is equal to $N_g=n_gh_g$, where $n_g$ is the order of $g$ (which is the number in front of the label for $g$ in the first column of Table \ref{table_chiT}) and $h_g$ is as in Table \ref{table_chiT}.

Explicitly, we need to verify that $J_{1,2}(N)=0$ for 
\[
N
\in
\mathcal N
:=
\{
1,2,3,4,5,6,7,8,9,11,14,15,16,20,23,24,36,63,144
\}
.
\]
Thus, for the remainder of the proof, we assume that $N\in\mathcal N$.
  According to Lemma \ref{lem:decps:pairprin}, the function $\psi\mapsto \{\phi,\psi\}$ for $\psi\in J^{sk,cusp}_{2,2}$ is identically zero if $\phi$ has vanishing principal parts at all cusps of $\G_0(N)$. On the other hand, the results of \cite{MR2680205} show that (\ref{pairing}) induces a non-degenerate pairing 
\begin{gather*}
{\mathbb{J}}_{1,2}(N)/J_{1,2}(N)\times J_{2,2}^{sk, cusp}(N)\to\C,
\end{gather*}
so if $\{ \phi,\psi\}=0$ for all $\psi\in J_{2,2}^{sk,cusp}(N)$, then $\phi$ belongs to the space  $J_{1,2}(N)$. The non-degeneracy of (\ref{pairing}) now follows if we can show that $J_{1,2}(N)=\{0\}$.
Suppose that $\phi\in J_{1,2}(N)$. Then we have that
\[
\phi(\tau;z)=h(\tau)q^{\frac18}\vartheta_{1,2}^{(1)}(\tau;z),
\]
where $\vartheta_{1,2}^{(1)}$ was defined in \eqref{eqn:MaaJacPoi:vartheta} and where $h(\tau)$ depends only on $\tau$. 
Hence, we find that $\phi'(\tau;z)$, where the derivative is taken in $z$, is a Jacobi form of index $2$, weight $2$, and level $N$. Hence, the specialization $\phi'(\tau;0)$ lies in $M_2(N)$. A short calculation using the Jacobi triple product then shows that 
\[\phi'(\tau;0)=h(\tau)q^{\frac18}\vartheta_{1,2}^{(1)}\,'(\tau;0)=2h(\tau)\eta^3(\tau)\in M_2(N).\]

Thus, as $\eta^3$ is a cusp form, we find that
\[h(8\tau)\eta^3(8\tau)\in S_2(8N).\]
As $\eta^3(8\tau)$ is an ordinary weight $\frac32$ theta series associated to the Dirichlet character $\left(\frac{-4}{\cdot}\right)$, it is easy to check that $\eta^3(\tau)\in S_{\frac32}(64)$, which implies that 
\[h(8\tau)\in M_{\frac12}\left(\operatorname{lcm}(64,N)\right).\]
Moreover, since $\phi$ has a Fourier expansion with integral powers of $q$, it follows that $h$ has a Fourier expansion with exponents which are all congruent to $\frac78\pmod1$. Hence, $h(8\tau)$ has integral exponents which are all congruent to $7\pmod8$.
However, a quick check, for example using MAGMA, shows that the subspace of $M_{\frac12}\left(\operatorname{lcm}(64,N)\right)$ spanned by forms with Fourier expansions supported on this progression is empty, which implies that $h=0$ and hence that $\phi=0$, as desired.

To complete the proof, we apply the non-degeneracy of $\langle \cdot , \cdot \rangle$ to obtain that $\phi = 0$ since $\mathbb{P}_\phi = 0$.
\end{proof}

\section{Maass-Jacobi forms for $M_{24}$}\label{sec:M24}

In this section we attach a semi-holomorphic Maass-Jacobi form $\phi_g$ to each element $g$ in the sporadic group $M_{24}$. We characterize the Maass-Jacobi forms that arise in this way, and in so doing, identify them as Maass-Jacobi Poincar\'e series. 

In preparation for the definition of $\phi_g$, we recall explicit expressions for the mock modular forms $H_g$ defined rather abstractly in the introduction by (\ref{eqn:intro-defnHg}). For the function $H_e(\tau)=H(\tau)=q^{-1/8}A(q)$ (see (\ref{eqn:intro-ZSCAdec}), (\ref{eqn:intro-Hviamu})), attached to the identity element, we have (see (7.16) of \cite{Dabholkar:2012nd}):
\begin{gather}\label{eqn:defnH}
H(\t) = \frac{-2 E_2(\t) + 48 F_2^{(2)}(\t)}{\h(\t)^3}.
\end{gather}
Here $E_2(\tau):=1-24\sum_{n\geq 1}\sigma_1(n)q^n$ denotes the quasi-modular Eisenstein series of weight $2$, where $ \sigma_1(n):=\sum_{d|n} d$. The function $F^{(2)}_2(\tau)$ is defined by 
\begin{gather*}
F_2^{(2)}(\t):= \sum_{\substack{r>s>0\\ r-s\equiv 1\pmod{ 2} }} (-1)^{r} s q^{\frac{rs}{2}}.
\end{gather*}
As mentioned in the introduction, $H$ is the unique mock modular form of weight $1/2$ for $\text{SL}_2(\Z)$ satisfying $H(\tau)=q^{-1/8}+O(q)$, with multiplier system coinciding with that of $\eta^{-3}$. Recall that $\chi(g)$ denotes the number of fixed points of $g\in M_{24}$, acting in the defining (i.e., unique non-trivial) permutation representation on $24$ points. The functions $H_g$ satisfy \cite{Cheng2011}
\begin{gather}\label{h_g_explicit}
H_g(\t) = \frac{\chi(g)}{24} H(\t) - \frac{ {T}_g(\t)}{\h(\t)^3},
\end{gather}
where the ${T}_g(\tau)$ are certain modular forms of weight two given in Table \ref{table_chiT}.

\begin{table}[h]
\centering  
\caption{Weight two forms attached to $M_{24}$\label{table_chiT}}
\begin{tabular}{rccl}
$[g]$ & $\chi(g)$ & $h_g$& ${T}_g(\tau)$\\
\noalign{\vskip 1mm}
\hline
\noalign{\vskip 1mm}
$1A$ & $24$ &$1$& $0 $\\
$2A$ & ${8}$ &$1$& $16\Lambda_2(\tau)$\\
$2B $&0&2& $2\eta(\tau)^8\eta(2\tau)^{-4}$\\
$3A$&6&1& $6\Lambda_3(\tau)$\\
$3B$&0&3& $2\eta(\tau)^6\eta(3\tau)^{-2}$\\
$4A$&0&2& $2\eta(2\tau)^8\eta(4\tau)^{-4}$\\
$4B$&4&1& $4(-\Lambda_2(\tau)+\Lambda_4(\tau))$\\
$4C$&0&4&$ 2\eta(\tau)^4\eta(2\tau)^2\eta(4\tau)^{-2}$\\
$5A$ &4&1&$ 2\Lambda_5(\tau)$\\
$6A$&2&1&$ 2(-\Lambda_2(\tau)-\Lambda_3(\tau)+\Lambda_6(\tau))$\\
$6B$&0&6&$ 2\eta(\tau)^2\eta(2\tau)^2\eta(3\tau)^2\eta(6\tau)^{-2}$\\
$7AB$&3&1&$ \Lambda_7(\tau)$\\
$8A$&2&1&$ -\Lambda_4(\tau)+\Lambda_8(\tau)$\\
$10A$& 0&2&$ 2\eta(\tau)^3\eta(2\tau)\eta(5\tau)\eta(10\tau)^{-1}$\\
$11A$& 2&1&$ 2(\Lambda_{11}(\tau)-11\eta(\tau)^2\eta(11\tau)^2)/5$\\
$12A$& 0&2&$ 2\eta(\tau)^3\eta(4\tau)^2\eta(6\tau)^3\eta(2\tau)^{-1}\eta(3\tau)^{-1}\eta(12\tau)^{-2}$\\
$12B$& 0&12&$ 2\eta(\tau)^4\eta(4\tau)\eta(6\tau)\eta(2\tau)^{-1}\eta(12\tau)^{-1}$\\
$14AB$& 1&1&$ (-\Lambda_2(\tau)-\Lambda_7(\tau)+\Lambda_{14}(\tau)-{14}\eta(\tau)\eta(2\tau)\eta(7\tau)\eta(14\tau))/3$\\
$15AB$& 1&1&$ (-\Lambda_3(\tau)-\Lambda_5(\tau)+\Lambda_{15}(\tau)-{15}\eta(\tau)\eta(3\tau)\eta(5\tau)\eta(15\tau))/4$\\
$21AB$& 0&3&$ (7\eta(\tau)^3\eta(7\tau)^3\eta(3\tau)^{-1}\eta(21\tau)^{-1}-\eta(\tau)^6\eta(3\tau)^{-2})/3$\\
$23AB$& 1&1&$ (\Lambda_{23}(\tau)-{23}f_{23,a}(\tau)-{69}f_{23,b}(\tau))/11$\\
\end{tabular}
\end{table}

\noindent
The first column of Table \ref{table_chiT} names the conjugacy classes of $M_{24}$ according to the conventions of \cite{ATLAS}. Note that a conjugacy class labelled by $nZ$ consists of elements of order $n$. We condense notation a little by writing $7AB$ as shorthand for $7A\cup 7B$, and similarly for $14AB$, etc. In the last column we have used the notation $\Lambda_M(\t)$ to represent the function 
\begin{gather*}
\Lambda_M(\t) := M q\frac{{\rm d}}{{\rm d}q} \left(\log \frac{\h(M\t)}{\h(\t)}\right)
	=\frac{M(M-1)}{24}+M\sum_{k>0}\sigma_1(k)\left(q^k-Mq^{Mk}\right),
\end{gather*}
which is a modular form of weight two for $\G_0(N)$ if $M|N$. The functions $f_{23,a}$ and $f_{23,b}$ are cusp forms of weight two for $\G_0(23)$, defined by
\begin{gather*}
\begin{split}
	f_{23,a}(\t)&:= \frac{\eta(\tau)^3\eta(23\tau)^3}{\eta(2\tau)\eta(46\tau)}
	+3\h(\t)^2\h(23\t)^2
	+4\eta(\tau)\eta(2\tau)\eta(23\tau)\eta(46\tau)
	+4\eta(2\tau)^2\eta(46\tau)^2, \\
	 f_{23,b}(\t)&:= \h(\t)^2\h(23\t)^2.
\end{split}
\end{gather*}
Note that the definition of $T_{g}$ given here for $g\in 23A\cup 23B$ corrects errors in \cite{MR2985326,Cheng2011}.
One can check, using Table \ref{table_chiT}, that the function $T_g$ is a modular form of weight two for $\Gamma_0(n_g)$, where $n_g:=o(g)$ is the order of $g$, but with a multiplier system $\bar{\rho}_g$ that is generally non-trivial. The third column of Table \ref{table_chiT} specifies this multiplier system, according to the rule that
\begin{gather}\label{eqn:M24-barrho}
	\bar{\rho}_{g}\left(\left(\begin{smallmatrix}a&b\\c&d\end{smallmatrix}\right)\right)
	:=
	e^{-2\pi i\frac{cd}{n_gh_g}}
\end{gather}
for $\left(\begin{smallmatrix}a&b\\c&d\end{smallmatrix}\right)\in\G_0(n_g)$ (see (\ref{eqn:jac:rhoNh})). In terms of the defining permutation representation of $M_{24}$, the value $h_g$ turns out to be the shortest cycle length in an expression for $g$ as a product of disjoint cycles. 
The multiplier system $\bar{\rho}_g$ is trivial on $\Gamma_0(n_gh_g)$ for all $g$ and therefore trivial on $\Gamma_0(n_g)$ exactly when $h_g=1$. Observe that $h_g=1$ if and only if $g$ has a fixed-point in the defining permutation representation. Equivalently, $h_g=1$ if and only if $\chi(g)\neq 0$. Thus, from (\ref{h_g_explicit}), we see that the $g$ for which $\bar{\rho}_g$ is non-trivial are exactly those $g$ for which $H_g$ is a (non-mock) weakly holomorphic modular form of weight $1/2$.

As discussed in \S\ref{sec:jac:mmf}, the function $H_g$ is the holomorphic part of a harmonic weak Maass form $\widehat{H}_g$. For $g\in M_{24}$, the completions $\widehat{H}_g$ are given explicitly {by} 
\begin{gather*}
\widehat{H}_g(\t):=H_g(\t)+\chi(g)(4{i})^{-\frac12} \int_{-\overline{\t}}^{\infty}(w+\t)^{-\frac12}
	\overline{\eta\left(-\overline{w}\right)^3}{\rm d}w.
\end{gather*}
We define the semi-holomorphic Maass-Jacobi forms $\phi_g(\t;z)$ of weight one and index two, for $g\in M_{24}$, by setting
\begin{gather*}
\phi_{g}(\tau;z):=-q^{\frac18}\widehat{H}_g(\t)\vartheta^{(1)}_{1,2}(\t;z),
\end{gather*}
where $\vartheta^{(1)}_{1,2}$ is given in (\ref{eqn:MaaJacPoi:vartheta}). It is not hard to see that $\phi_g \in \widehat{\mathbb{J}}^{\text{\upshape cusp}}_{\sigma_g, 1, 2} (n_g)$ and 
\begin{gather*}
\sigma_g(A):=\rho(A)=\bar{\rho}_{n_g|h_g}(\g)
\end{gather*}
for $A=[\gamma,(\lambda,\mu)]$ (see (\ref{eqn:jac:rhoNh}) and (\ref{eqn:M24-barrho})). In particular, the multiplier of $\phi_g$ is trivial precisely when $\chi(g)\neq 0$.

We now compute the principal parts of the $\phi_g$. In order to do this, we must determine the asymptotic behavior of these weight two forms $T_g$ at each cusp of $\G_0(n_g)$. The necessary data is presented in Table \ref{tab:M24:hatTcusp}. 

\begin{proposition}\label{prop:m24:ppartsphig}
If $g\in M_{24}$, then $\mathbb{P}_{\phi_g}= 2\vartheta_{1,2}^{(1)}$ and $\mathbb{P}_{\phi_g,\alpha}= 0$ for $\alpha$ any non-infinite cusp of $\G_0(n_g)$. In particular, $\phi_g \in \widehat{\mathbb{J}}^{\text{\upshape cusp}, \infty}_{\sigma_g, 1, 2} (n_g)$. 
\end{proposition}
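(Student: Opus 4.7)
The plan is to exploit the theta decomposition of $\vartheta_{1,2}^{(1)}$ to reduce the analysis of the Jacobi principal parts of $\phi_g$ to the (much simpler) analysis of the polar parts of the mock modular form $H_g$. A direct comparison of Fourier expansions gives
\[
q^{\frac18}\vartheta_{1,2}^{(1)}(\tau;z)=\theta_{2,1}(\tau;z)-\theta_{2,-1}(\tau;z),
\]
where $\theta_{m,r}(\tau;z):=\sum_{\lambda\in\Z}q^{(2m\lambda+r)^2/(4m)}\zeta^{2m\lambda+r}$ is the standard unary theta function of weight $\tfrac12$ and index $m$. Substituting into the definition of $\phi_g$ yields the compact expression
\[
\phi_g(\tau;z)=-\widehat H_g(\tau)\bigl(\theta_{2,1}(\tau;z)-\theta_{2,-1}(\tau;z)\bigr).
\]

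The functions $\theta_{2,s}$ for $s\in\Z/4\Z$ span a finite-dimensional representation of $\SL_2(\Z)$ under the weight $\tfrac12$, index $2$ slash action, so for every $\gamma\in\SL_2(\Z)$ the transform $(\theta_{2,1}-\theta_{2,-1})|_\gamma$ is a linear combination of the $\theta_{2,s}$. Every Fourier monomial of each $\theta_{2,s}$ is supported on the discriminant locus $D=r^2-8n=0$. Expanding the holomorphic part of $\phi_g|_\gamma$ term by term, a monomial $q^m$ appearing in $H_g|_\gamma$ paired with a theta monomial of discriminant zero produces a Jacobi monomial of discriminant $-8m$. Hence the Jacobi principal part (the $D>0$ terms) of $\phi_g$ at the cusp $\alpha=\gamma\infty$ is non-zero precisely when $H_g|_\gamma$ has a non-zero polar part; the non-holomorphic completion in $\widehat H_g$ only affects the $c_\phi^-$ part of the Fourier expansion and is irrelevant here. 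At $\alpha=\infty$, the expansion $q^{1/8}H_g(\tau)=-2+O(q)$ combined with the fact that every monomial of $\vartheta_{1,2}^{(1)}$ has $D=1$ gives $-q^{1/8}H_g\,\vartheta_{1,2}^{(1)}=2\vartheta_{1,2}^{(1)}+O(q)\cdot\vartheta_{1,2}^{(1)}$, and the $O(q)$ contribution produces only monomials with $D\le-7$, so the principal part at infinity is $2\vartheta_{1,2}^{(1)}$ as claimed.

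It remains to verify that $H_g|_\gamma$ has vanishing polar part at every cusp $\alpha=\gamma\infty$ of $\Gamma_0(n_g)$ that is not equivalent to $\infty$. Via \eqref{h_g_explicit}, write $H_g|_\gamma=\tfrac{\chi(g)}{24}H|_\gamma-(T_g|_\gamma)/(\eta^3|_\gamma)$. Since $H$ is modular for $\SL_2(\Z)$, $H|_\gamma$ agrees with $H$ up to a multiplier and so contributes a polar term proportional to $-2q^{-1/8}$ at every cusp; and since $\eta^3|_\gamma$ has leading behavior $q^{1/8}$ at every cusp, the polar part of $(T_g|_\gamma)/(\eta^3|_\gamma)$ is controlled by the leading Fourier coefficient of $T_g|_\gamma$ at $\alpha$. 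The main work is therefore a cusp-by-cusp verification: using the explicit expressions for $T_g$ recorded in Table \ref{table_chiT}, together with the standard transformation formulas for $\eta$ and for the $\Lambda_M$, one checks for each conjugacy class in $M_{24}$ that the two polar contributions cancel exactly, so that $H_g|_\gamma$ is holomorphic at every $\alpha\not\sim\infty$. This case-by-case cusp analysis is the main obstacle — elementary but tedious — although one may alternatively invoke the Rademacher-sum characterization of $H_g$ established in \cite{Cheng2011}, which shows simultaneously for every $g\in M_{24}$ that the polar parts of $H_g$ at non-infinite cusps vanish. Either route yields $\mathbb{P}_{\phi_g,\alpha}=0$, placing $\phi_g$ in $\widehat{\mathbb{J}}^{\text{\upshape cusp},\infty}_{\sigma_g,1,2}(n_g)$.
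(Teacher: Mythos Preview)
Your argument is essentially the paper's: both reduce the Jacobi principal parts of $\phi_g$ to the cusp behavior of $H_g$ (equivalently, of $T_g$), and then carry out a case-by-case check. Your packaging via the identity $q^{1/8}\vartheta_{1,2}^{(1)}=\theta_{2,1}-\theta_{2,-1}$ and the $\SL_2(\Z)$-invariance of the discriminant-zero support of the $\theta_{2,s}$ is a clean way to see why only the polar terms of $H_g|_\gamma$ matter; the paper instead transforms $\phi_g$ directly and tracks the resulting discriminant.

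One imprecision to tighten: the polar part of $(T_g|_\gamma)/\eta^3$ is \emph{not} controlled by the leading Fourier coefficient of $T_g|_\gamma$ alone. Since $\eta^{-3}$ contributes a factor $q^{-1/8}(1+O(q))$, every coefficient of $T_g|_\gamma$ at an exponent strictly less than $1/8$ produces a genuinely polar term, and for several classes (e.g.\ $11A$, $23AB$, various $h_g>1$ classes) the cusp width $w'$ is large enough that multiple such exponents exist. The paper's precise condition is $T_g|_2\gamma(\tau)=-\chi(g)/12+O(q^{1/8})$, checked against Table~\ref{tab:M24:hatTcusp}; your case-by-case verification needs to confirm exactly this, not just the constant term. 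Your alternative route, invoking the Rademacher-sum construction of \cite{Cheng2011} to conclude directly that $H_g$ has no poles away from $\infty$, is valid and sidesteps the tabulation entirely --- a shortcut the paper does not exploit here.
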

\begin{proof}
From the explicit expression (\ref{h_g_explicit}) for $H_g$, we see that 
\begin{gather*}
\phi^+_g(\tau; z)=\left(-\frac{\chi(g)}{24}H(\tau)+\frac{{T}_g(\t)}{\eta(\t)^3}\right){q^{\frac18}\vartheta^{(1)}_{1,2}} (\tau; z).
\end{gather*}
Inspecting Tables \ref{table_chiT} and \ref{tab:M24:hatTcusp}, we verify that ${T}_g(\tau)=2-\frac{\chi(g)}{12}+O(q)$. Then the identity $\mathbb{P}_{\phi_g}=2\vartheta^{(1)}_{1,2}$ follows, since $H(\tau)q^{\frac{1}{8}}=-2+O(q)$ according to (\ref{eqn:defnH}) and $\frac{T_g}{\eta^3}q^{\frac{1}{8}}$ has the same constant term as $T_g$. 

It remains to verify that $\mathbb{P}_{\phi_g,\alpha}=0$ for $\alpha$ a non-infinite cusp of $\Gamma_0(n_g)$. For this, let $\gamma\in\text{SL}_2(\Z)$ such that $\gamma\infty\in\widehat{\Q}$ is a representative for $\alpha$, and set $A=[\gamma,(0,0)]$. Then we have 
\begin{equation*}
	\phi_g|_{1,2}A
	(\tau; z)
	=\left(-\frac{\chi(g)}{24}\widehat{H}(\tau)+\frac{{T}_g|_2\gamma(\tau)}{\eta^3(\tau)}\right)q^{\frac18}\vartheta^{(1)}_{1,2}(\tau; z).
\end{equation*}
Therefore, $\left(\phi_g|_{1,2}A\right)^+$ is given by $(-\frac{\chi(g)}{24}{H}+\frac{{T}_g|_2\gamma}{\eta^3})q^{\frac{1}{8}}\vartheta^{(1)}_{1,2}$.
Recall the values $h_g$ defined in Table \ref{table_chiT}. Note that ${T}_g|_2\gamma$ is a power series in $q^{\frac1w}$, with $w$ the width of $\alpha$ if $h_g=1$, while if $h_g>1$, then ${T}_g|_2\gamma$ is a power series in $q^{\frac{1}{w'}}$ where $w'$ is the width of the cusp of $\Gamma_0(n_gh_g)$ that is represented by $\gamma\infty$. This is because the multiplier system for ${T}_g$, a modular form for $\Gamma_0(n_g)$, is trivial when restricted to $\Gamma_0(n_gh_g)$. Note that these properties of $T_g$ can be verified directly using Table \ref{table_chiT}. For a given $g$, we now define for $k\in\Z$ values $c(k)$ by requiring
\begin{gather}\label{eqn:M24:FouExpHolCsp}
\left(-\frac{\chi(g)}{24}H(\tau)+\frac{{T}_g|_2\gamma(\tau)}{\eta^3(\tau)}\right)q^{\frac18}=:\sum_{k \gg -\infty} c(k)q^{\frac{k}{w'}}.
\end{gather}
In order to conclude $\mathbb{P}_{\phi_g,\alpha}=0$, we must verify that $c(k)=0$ for $k<w'/8$, since a non-zero term $c(k)q^{k/w'}$ in (\ref{eqn:M24:FouExpHolCsp}) leads to non-zero terms $c(k)q^{2\lambda^2+\lambda+k/w'}\zeta^{4\lambda+1}$ (for example), in the Fourier expansion  of $\left(\phi_g|_{1,2}A\right)^+$. For such terms $D=r^2-4mn/w'=1-8k/w'$ is positive if $k<w'/8$, where $m=2$, $n/w'=2\lambda^2+\lambda+k/w'$, and $r=4\lambda+1$. So we need to check that the expansion of ${T}_g|_2\gamma$ satisfies 
\begin{gather}\label{eqn:M24:Tgatarbcusp}
{T}_g|_2\gamma(\t)=-\frac{\chi(g)}{12}+O\left(q^{\frac{1}{8}}\right),
\end{gather}
whenever $\alpha$ is not the infinite cusp of $\Gamma_0(n_g)$. The identity (\ref{eqn:M24:Tgatarbcusp}) is verified case-by-case by inspecting Tables \ref{table_chiT} and \ref{tab:M24:hatTcusp}. This completes the proof.
\end{proof}
\begin{table}[ht]
\centering  
\caption{Asymptotics of weight two forms attached to $M_{24}$\label{tab:M24:hatTcusp}}
\begin{minipage}[ht]{0.45\linewidth}
\begin{tabular}{rcccl}
$[g]$&$N_g$&$h_g$&cusp rep.&${T}_g$ exp.\\
\noalign{\vskip 1mm}\hline\noalign{\vskip 1mm}
$1A$&$1$&$1$&$\infty$&0\\
\noalign{\vskip 1mm}\hline\noalign{\vskip 1mm}
\multirow{2}{*}{$2A$}&\multirow{2}{*}{$2$}&\multirow{2}{*}{$1$}&$\infty$&$\frac{4}{3}+O(q)$\\
&&&$0$&$-\frac{2}{3}+O(q^{\frac{1}{2}})$\\
\noalign{\vskip 1mm}\hline\noalign{\vskip 1mm}
\multirow{2}{*}{$2B$}&\multirow{2}{*}{$2$}&\multirow{2}{*}{$2$}&$\infty$&$2+O(q)$\\
&&&$0$&$O(q^{\frac{1}{4}})$\\
\noalign{\vskip 1mm}\hline\noalign{\vskip 1mm}
\multirow{2}{*}{$3A$}&\multirow{2}{*}{$3$}&\multirow{2}{*}{$1$}&$\infty$&$\frac{3}{2}+O(q)$\\
&&&$0$&$-\frac{1}{2}+O(q^{\frac{1}{3}})$\\
\noalign{\vskip 1mm}\hline\noalign{\vskip 1mm}
\multirow{2}{*}{$3B$}&\multirow{2}{*}{$3$}&\multirow{2}{*}{$3$}&$\infty$&$2+O(q)$\\
&&&$0$&$O(q^{\frac{2}{9}})$\\
\noalign{\vskip 1mm}\hline\noalign{\vskip 1mm}
\multirow{3}{*}{$4A$}&\multirow{3}{*}{$4$}&\multirow{3}{*}{$2$}&$\infty$&$2+O(q)$\\
&&&$0$&$O(q^{\frac{1}{8}})$\\
&&&$\frac{1}{2}$&$O(q^{\frac{1}{6}})$\\
\noalign{\vskip 1mm}\hline\noalign{\vskip 1mm}
\multirow{3}{*}{$4B$}&\multirow{3}{*}{$4$}&\multirow{3}{*}{$1$}&$\infty$&$\frac{5}{3}+O(q)$\\
&&&$0$&$-\frac{1}{3}+O(q^{\frac{1}{4}})$\\
&&&$\frac{1}{2}$&$-\frac{1}{3}+O(q)$\\
\noalign{\vskip 1mm}\hline\noalign{\vskip 1mm}
\multirow{3}{*}{$4C$}&\multirow{3}{*}{$4$}&\multirow{3}{*}{$4$}&$\infty$&$2+O(q)$\\
&&&$0$&$O(q^{\frac{3}{16}})$\\
&&&$\frac{1}{2}$&$O(q^{\frac{1}{4}})$\\
\noalign{\vskip 1mm}\hline\noalign{\vskip 1mm}
\multirow{2}{*}{$5A$}&\multirow{2}{*}{$5$}&\multirow{2}{*}{$1$}&$\infty$&$\frac{5}{3}+O(q)$\\
&&&$0$&$-\frac{1}{3}+O(q^{\frac{1}{5}})$\\
\noalign{\vskip 1mm}\hline\noalign{\vskip 1mm}
\multirow{4}{*}{$6A$}&\multirow{4}{*}{$6$}&\multirow{4}{*}{$1$}&$\infty$&$\frac{11}{6}+O(q)$\\
&&&$0$&$-\frac{1}{6}+O(q^{\frac{1}{6}})$\\
&&&$\frac{1}{2}$&$-\frac{1}{6}+O(q^{\frac{1}{3}})$\\
&&&$\frac{1}{3}$&$-\frac{1}{6}+O(q^{\frac{1}{2}})$\\
\noalign{\vskip 1mm}\hline\noalign{\vskip 1mm}
\multirow{4}{*}{$6B$}&\multirow{4}{*}{$6$}&\multirow{4}{*}{$6$}&$\infty$&$2+O(q)$\\
&&&$0$&$O(q^{\frac{5}{36}})$\\
&&&$\frac{1}{2}$&$O(q^{\frac{2}{9}})$\\
&&&$\frac{1}{3}$&$O(q^{\frac{1}{4}})$\\
\noalign{\vskip 1mm}\hline\noalign{\vskip 1mm}
\multirow{2}{*}{$7AB$}&\multirow{2}{*}{$7$}&\multirow{2}{*}{$1$}&$\infty$&$\frac{7}{4}+O(q)$\\
&&&$0$&$-\frac{1}{4}+O(q^{\frac{1}{7}})$\\
\noalign{\vskip 1mm}\hline\noalign{\vskip 1mm}
\multirow{4}{*}{$8A$}&\multirow{4}{*}{$8$}&\multirow{4}{*}{$1$}&$\infty$&$\frac{11}{6}+O(q)$\\
&&&$0$&$-\frac{1}{6}+O(q^{\frac{1}{8}})$\\
&&&$\frac{1}{2}$&$-\frac{1}{6}+O(q^{\frac{1}{2}})$\\
&&&$\frac{1}{4}$&$-\frac{1}{6}+O(q)$\\
\noalign{\vskip 1mm}\hline
\end{tabular}
\end{minipage}
\begin{minipage}[ht]{0.45\linewidth}
\begin{tabular}{rcccl}
$[g]$&$N_g$&$h_g$&cusp rep.&${T}_g$ exp.\\
\noalign{\vskip 1mm}\hline\noalign{\vskip 1mm}
\multirow{4}{*}{$10A$}&\multirow{4}{*}{$10$}&\multirow{4}{*}{$2$}&$\infty$&$2+O(q)$\\
&&&$0$&$O(q^{\frac{3}{10}})$\\
&&&$\frac{1}{2}$&$O(q^{\frac{1}{5}})$\\
&&&$\frac{1}{5}$&$O(q^{\frac{1}{4}})$\\
\noalign{\vskip 1mm}\hline\noalign{\vskip 1mm}
\multirow{2}{*}{$11A$}&\multirow{2}{*}{$11$}&\multirow{2}{*}{$1$}&$\infty$&$\frac{11}{6}+O(q)$\\
&&&$0$&$-\frac{1}{6}+O(q^{\frac{2}{11}})$\\
\noalign{\vskip 1mm}\hline\noalign{\vskip 1mm}
\multirow{6}{*}{$12A$}&\multirow{6}{*}{$12$}&\multirow{6}{*}{$2$}&$\infty$&$2+O(q)$\\
&&&$0$&$O(q^{\frac{1}{8}})$\\
&&&$\frac{1}{2}$&$O(q^{\frac{1}{6}})$\\
&&&$\frac{1}{3}$&$O(q^{\frac{1}{8}})$\\
&&&$\frac{1}{4}$&$O(q^{\frac{1}{3}})$\\
&&&$\frac{1}{6}$&$O(q^{\frac{1}{2}})$\\
\noalign{\vskip 1mm}\hline\noalign{\vskip 1mm}
\multirow{6}{*}{$12B$}&\multirow{6}{*}{$12$}&\multirow{6}{*}{$12$}&$\infty$&$2+O(q)$\\
&&&$0$&$O(q^{\frac{23}{144}})$\\
&&&$\frac{1}{2}$&$O(q^{\frac{5}{36}})$\\
&&&$\frac{1}{3}$&$O(q^{\frac{7}{48}})$\\
&&&$\frac{1}{4}$&$O(q^{\frac{2}{9}})$\\
&&&$\frac{1}{6}$&$O(q^{\frac{1}{4}})$\\
\noalign{\vskip 1mm}\hline\noalign{\vskip 1mm}
\multirow{4}{*}{$14AB$}&\multirow{4}{*}{$14$}&\multirow{4}{*}{$1$}&$\infty$&$\frac{23}{12}+O(q)$\\
&&&$0$&$-\frac{1}{12}+O(q^{\frac{1}{7}})$\\
&&&$\frac{1}{2}$&$-\frac{1}{12}+O(q^{\frac{1}{7}})$\\
&&&$\frac{1}{7}$&$-\frac{1}{12}+O(q^{\frac{1}{2}})$\\
\noalign{\vskip 1mm}\hline\noalign{\vskip 1mm}
\multirow{4}{*}{$15AB$}&\multirow{4}{*}{$15$}&\multirow{4}{*}{$1$}&$\infty$&$\frac{23}{12}+O(q)$\\
&&&$0$&$-\frac{1}{12}+O(q^{\frac{2}{15}})$\\
&&&$\frac{1}{3}$&$-\frac{1}{12}+O(q^{\frac{1}{5}})$\\
&&&$\frac{1}{5}$&$-\frac{1}{12}+O(q^{\frac{1}{3}})$\\
\noalign{\vskip 1mm}\hline\noalign{\vskip 1mm}
\multirow{4}{*}{$21AB$}&\multirow{4}{*}{$21$}&\multirow{4}{*}{$3$}&$\infty$&$2+O(q)$\\
&&&$0$&$O(q^{\frac{8}{63}})$\\
&&&$\frac{1}{3}$&$O(q^{\frac{1}{7}})$\\
&&&$\frac{1}{7}$&$O(q^{\frac{2}{9}})$\\
\noalign{\vskip 1mm}\hline\noalign{\vskip 1mm}
\multirow{2}{*}{$23AB$}&\multirow{2}{*}{$23$}&\multirow{2}{*}{$1$}&$\infty$&$\frac{23}{12}+O(q)$\\
&&&$0$&$-\frac{1}{12}+O(q^{\frac{3}{23}})$\\
\noalign{\vskip 1mm}\hline
\end{tabular}
\end{minipage}
\end{table}

We next prove Theorem 1.1, giving a characterization of the semi-holomorphic Maass-Jacobi forms attached to $M_{24}$ by Mathieu moonshine.

\begin{proof}[Proof of Theorem 1.1]

Let $g\in M_{24}$ and suppose that $\phi$ satisfies the hypotheses of the theorem. Then by Proposition \ref{prop:m24:ppartsphig}, $\phi$ and $\phi_g$ have the same principal parts at all cusps of $\G_0(n_g)$ and thus they are equal by Proposition \ref{thm:uniq}.
\end{proof}

Theorem 1.1 admits an important corollary.
Namely, it follows from Theorem \ref{thm:M24:chzn} that the functions $\phi_g$ coincide with Poincar\'e series constructed in Theorem \ref{AnaC}.
\begin{corollary}\label{cor:phigispseries}
If $g\in M_{24}$, then
$\phi_g=P^{(0,1)}_{1,2,n_g|h_g,\frac34}$.
\end{corollary}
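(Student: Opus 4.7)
The plan is to deduce the corollary directly from Theorem \ref{thm:M24:chzn} (equivalently, from the uniqueness result Proposition \ref{thm:uniq}) by verifying that the Maass-Jacobi Poincar\'e series $P^{(0,1)}_{1,2,n_g|h_g,3/4}$ satisfies precisely the hypotheses that characterize $\phi_g$.

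First, I would check that $P^{(0,1)}_{1,2,n_g|h_g,3/4}$ is an admissible candidate to which Theorem \ref{thm:M24:chzn} applies. Theorem \ref{AnaC} furnishes the analytic continuation of $P^{(0,1)}_{1,2,N|h,s}$ to $s=\tfrac34$ and shows that the resulting function lies in $\widehat{\mathbb{J}}^{\text{\upshape cusp},\infty}_{\rho_{N|h},1,2}(N)$. Taking $N=n_g$ and $h=h_g$, and recalling from \S\ref{sec:M24} that the multiplier system attached to $\phi_g$ is $\sigma_g=\rho_{n_g|h_g}$, we see that $P^{(0,1)}_{1,2,n_g|h_g,3/4}$ and $\phi_g$ have the same weight, index, level, and multiplier system.

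Next I would compare principal parts. On the one hand, Theorem \ref{AnaC} asserts that the principal part of $P^{(0,1)}_{1,2,n_g|h_g,3/4}$ at the infinite cusp equals $2\vartheta^{(1)}_{1,2}$, while the fact that it belongs to $\widehat{\mathbb{J}}^{\text{\upshape cusp},\infty}_{\rho_{n_g|h_g},1,2}(n_g)$ means that its principal parts at all non-infinite cusps of $\Gamma_0(n_g)$ vanish. On the other hand, Proposition \ref{prop:m24:ppartsphig} establishes exactly the same statement for $\phi_g$: its principal part at $\infty$ is $2\vartheta^{(1)}_{1,2}$ and its principal parts at all other cusps vanish.

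Finally, I would invoke Theorem \ref{thm:M24:chzn} (or, at the same level of strength, Proposition \ref{thm:uniq}): any semi-holomorphic Maass-Jacobi form of weight one and index two with the same level and multiplier as $\phi_g$, whose principal part at $\infty$ is $2\vartheta^{(1)}_{1,2}$ and which has vanishing principal parts at all other cusps, must equal $\phi_g$. Applying this to $\phi=P^{(0,1)}_{1,2,n_g|h_g,3/4}$ gives the desired identity. No step in this argument is a genuine obstacle, since all of the hard analytic work (convergence and Fourier expansion of the Poincar\'e series, computation of principal parts of $\phi_g$, and uniqueness of Maass-Jacobi forms with prescribed principal parts in the relevant levels) has already been carried out; the corollary is essentially a bookkeeping consequence of the main theorem.
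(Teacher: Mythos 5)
Your argument is exactly the paper's proof: apply Theorem \ref{AnaC} to see that $P^{(0,1)}_{1,2,n_g|h_g,3/4}$ lies in $\widehat{\mathbb{J}}^{\text{\upshape cusp},\infty}_{\sigma_g,1,2}(n_g)$ with principal part $2\vartheta^{(1)}_{1,2}$ at infinity and vanishing principal parts elsewhere, then invoke Theorem \ref{thm:M24:chzn} (via Propositions \ref{prop:m24:ppartsphig} and \ref{thm:uniq}) to conclude $\phi=\phi_g$. The proposal is correct and takes essentially the same route.
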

\begin{proof}
Let $g\in M_{24}$ and set $\phi:=P^{(0,1)}_{1,2,n_g|h_g,3/4}$. Then $\phi$ belongs to $\widehat{\mathbb{J}}^{\text{\upshape cusp}, \infty}_{\sigma_g, 1, 2} (n_g)$ and satisfies $\mathbb{P}_{\phi}=2\vartheta^{(1)}_{1,2}$, according to Theorem \ref{AnaC}. So $\phi$ satisfies the hypotheses of Theorem \ref{thm:M24:chzn} and thus $\phi=\phi_g$, as required.
\end{proof}
 
The physical significance of Corollary \ref{cor:phigispseries}, stating that the $\phi_g$ may be constructed uniformly via Poincar\'e series, has been discussed in the introduction. We conclude the paper by demonstrating that the analogous statement for the weak Jacobi forms $Z_g$, more closely connected to K3 surfaces via elliptic genera (see (\ref{eqn:intro-ZSCAdec})), does not hold. 
Indeed, in order for $Z_g$ to have vanishing principal part at a non-infinite cusp $\alpha$, we require the condition 
\begin{gather}\label{eqn:M24:TgforZg}
{T}_g|_2\gamma(\t)=-\frac{\chi(g)}{12}+O\left(q^{\frac{1}{4}}\right)
\end{gather} 
to hold for $\gamma\in\SL_2(\Z)$ such that $\gamma\infty$ represents $\alpha$ and this is stronger than (\ref{eqn:M24:Tgatarbcusp}). 
To see that (\ref{eqn:M24:TgforZg}) is necessary, we combine (\ref{eqn:intro-ZgHg}) with (\ref{h_g_explicit}) to obtain 
\begin{gather*}
Z_g(\tau;z)=\frac{\chi(g)}{12}\phi_{0,1}(\tau;z)-T_g(\tau)\phi_{-2,1}(\tau;z),
\end{gather*} 
where $\phi_{0,1}:=\frac12 Z$ and $\phi_{-2,1}:={\theta_1^2}{\eta^{-6}}$ are weak Jacobi forms of index one and with weights zero and negative two, respectively. Recall from \cite{eichler_zagier} that for a weak Jacobi form $\phi$ of index one, we have a Fourier expansion  $$\phi(\tau;z)=\displaystyle\sum_{\substack{n,r\\ n\geq0}}c(n,r)q^n\zeta^r,$$ and further recall that $c(n,r)$ only depends on $D:=r^2-4n$. The polar part of $\phi$ at the infinite cusp is $\sum_{D>0} c(D)q^n\zeta^r$. So from the expansions $\phi_{0,1}(\tau;z)=\zeta^{-1}+10+\zeta+O(q)$ and $\phi_{-2,1}(\tau;z)=\zeta^{-1}-2+\zeta+O(q)$  \cite{eichler_zagier} we conclude that $\phi_{0,1}$ and $\phi_{-2,1}$ have the same prinicipal parts at infinity, namely $\sum_{n\geq 0, D=1}q^n\zeta^r$. 
Now for the expansion of $Z_g$ at a cusp $\alpha$, represented by $\gamma\infty$ for some $\gamma\in \SL_2(\Z)$, we have
\begin{gather*}
Z_g|_{0,1}[\gamma,(0,0)]=\frac{\chi(g)}{12}\phi_{0,1}-\left({T_g}|_{2}\gamma\right)\phi_{-2,1},
\end{gather*} 
since $\phi_{0,1}$ and $\phi_{-2,1}$ are invariant for the relevant actions of $\Gamma^J$. From this expression, we see that a term $c(k)q^{k/w}$ in the Fourier expansion of $T_g|_2\gamma$ contributes a term $c(k)q^{k/w}\zeta$ to the principal part of $Z_g$, whenever $1-4k/w>0$ and $c(k)\neq 0$. In other words, we require $c(k)=0$ whenever $0< \frac{k}{w}< \frac14$ in order for the principal part of $Z_g$ at $\alpha$ to be vanishing. The fact that $\phi_{0,1}$ and $\phi_{-2,1}$ have the same principal parts is what determines the constant term in (\ref{eqn:M24:TgforZg}). Observe now that the condition (\ref{eqn:M24:TgforZg}) fails for $g\in 11A$, for example. For taking $\alpha$ to be the cusp represented by $0$ and taking $\gamma=\left(\begin{smallmatrix}0&-1\\1&0\end{smallmatrix}\right)$ we easily compute, using the explicit expression for $T_g$ in Table \ref{table_chiT}, that the coefficient of $q^{\frac{2}{11}}$ in $T_g|_2\gamma$ is non-zero.

\bigbreak

\clearpage


\end{document}